\title{Fast Strategies In Maker-Breaker Games Played on Random Boards}
\author{
\quad{Dennis Clemens
\thanks{Department of Mathematics and Computer Science, Freie Universit\"{a}t Berlin, Germany. Email: d.clemens@fu-berlin.de.
Research supported by DFG, project SZ 261/1-1.}}
\quad{Asaf Ferber
\thanks{School of Mathematical Sciences, Raymond and Beverly Sackler Faculty of Exact Sciences, Tel Aviv University, Tel Aviv, 69978, Israel. Email: ferberas@post.tau.ac.il}}
\quad{Michael Krivelevich
\thanks{School of Mathematical Sciences,
Raymond and Beverly Sackler Faculty of Exact Sciences, Tel Aviv
University, Tel Aviv, 69978, Israel. Email: krivelev@post.tau.ac.il.
Research supported in part by USA-Israel BSF grant 2006322, by grant
1063/08 from the Israel Science Foundation, and by a Pazy memorial
award.}} \quad{Anita Liebenau
\thanks{Department of Mathematics and Computer Science, Freie Universit\"{a}t Berlin, Germany. Email: liebenau@math.fu-berlin.de.
Research supported by DFG within the graduate school Berlin Mathematical School.}}}

\documentclass[11pt,a4paper]{article}
\usepackage{amsmath,amssymb,latexsym,color,epsfig,a4, enumerate}
\usepackage{bbm}
\usepackage{a4wide} %Text is wider.

%%%Comments for communicating with coauthors
\newif\ifnotesw\noteswtrue% T to show comments; F supresses.

%\noteswfalse   % turn off marginal notes for now

\parindent 0in
\parskip 2mm

%\addtolength{\textwidth}{1in} \addtolength{\oddsidemargin}{-0.5in}
%\addtolength{\evensidemargin}{-0.5in}
%\addtolength{\topmargin}{-0.5in} \addtolength{\textheight}{1in}

% for the special Deg-game font (calligraphic lowercase letters)
\DeclareFontFamily{OT1}{pzc}{}
\DeclareFontShape{OT1}{pzc}{m}{it}{<-> s * [1.10] pzcmi7t}{}
\DeclareMathAlphabet{\mathpzc}{OT1}{pzc}{m}{it}

\def\({\left(}
\def\){\right)}

\def\d{\delta}

\def\Prob{{\bf Pr}}

\def\cf{{\cal F}}

\newtheorem{theorem}{Theorem}[section]
\newtheorem{lemma}[theorem]{Lemma}
\newtheorem{claim}[theorem]{Claim}

\newtheorem{definition}[theorem]{Definition}

\numberwithin{equation}{section}

\newcommand{\reals}{\ensuremath{\mathbbm{R}}}

\newcommand{\Exp}{\ensuremath{\mathbbm{E}}}

\def\cG{{\cal G}}

\renewcommand{\epsilon}{\varepsilon}
\newcommand{\gnp}{\ensuremath{\mathcal{G}_{n,p}}}
\newcommand{\nat}{\ensuremath{\mathbbm{N}}}
\newcommand{\Deg}{\ensuremath{\mathpzc{Deg}}}

\newenvironment{proof}{\noindent{\bf Proof\,}}{\hfill$\Box$}

\begin{document}
\maketitle

\begin{abstract}
In this paper we analyze classical Maker-Breaker games played on the
edge set of a sparse random board $G\sim \gnp$. We consider the
Hamiltonicity game, the perfect matching game and the
$k$-connectivity game. We prove that for $p(n)\geq \text{polylog}(n)/n$,
the board $G\sim \gnp$ is typically such that Maker can win these
games asymptotically as fast as possible, i.e. within $n+o(n)$,
$n/2+o(n)$ and $kn/2+o(n)$ moves respectively.
\end{abstract}

\section{Introduction} \label{sec::intro}

%
%
%For example, it is easily shown that for every fixed integer $k$,
%the $k$-clique game played on the edge set of the complete graph
%$K_n$ for $n\geq R(k,k)$ is Red's win ($R(k,k)$ stands for the
%diagonal Ramsey number). However, no explicit strategy is known even for the
%case $k=5$!
%
%There are few examples of natural games for which an explicit
%strategy is known such as the perfect matching game, the
%Hamiltonicity game, the minimum degree $k$ game and the
%$k$-connectivity game (see, \cite{FH1} and \cite{FH}).

Let $X$ be any finite set and let ${\mathcal F}\subseteq 2^X$ be a
family of subsets. Usually, $X$ is called the {\em board}, whereas
${\mathcal F}$ is referred to as the family of {\em winning sets}.
In the $(a:b)$ \emph{Maker-Breaker} game $(X,{\mathcal F})$ (also
known as a \emph{weak game}), two players called Maker and Breaker
play in rounds. In every round Maker claims $a$ previously unclaimed
elements of the board $X$ and Breaker responds by claiming $b$
previously unclaimed elements of the board. Maker wins as soon as he
fully claims all elements of some $F\in {\mathcal F}$. If Maker does
not fully claim any winning set by the time all board elements are
claimed, then Breaker wins the game. The most basic case is $a=b=1$,
the so called \emph{unbiased} game. Notice that being the first
player is never a disadvantage in a Maker-Breaker game. Therefore,
in order to prove that Maker can win some Maker-Breaker game as the
first or the second player it is enough to prove that he can win
this game as a second player. Hence, we will always assume that
Maker is the second player to move.

It is natural to play Maker-Breaker games on the edge set of a graph
$G=(V,E)$ with $|V|=n$. In this case, $X=E$. In the {\em
connectivity game}, Maker wins if and only if his edges contain a
spanning tree. In the \emph{perfect matching} game ${\mathcal
M}_n(G)$ the winning sets are all sets of $\lfloor n/2 \rfloor$
independent edges of $G$. Note that if $n$ is odd, then such a
matching covers all vertices of $G$ but one. In the Hamiltonicity
game ${\mathcal H}_n(G)$ the winning sets are all edge sets of
Hamilton cycles of $G$. Given a positive integer $k$, in the
$k$-connectivity game ${\mathcal C}_n^k(G)$ the winning sets are all
edge sets of $k$-connected subgraphs of $G$.

Maker-Breaker games played on the edge set of the complete graph
$K_n$ are well studied.
In this case, it is easy to see
(and also follows from \cite{L}) that for every $n\geq 4$, Maker can
win the unbiased connectivity game in $n-1$ moves, which is clearly
the best possible. It was proved in \cite{HKSS} that Maker can win
the unbiased perfect matching game on $K_n$ within $n/2+1$ moves (which is
clearly the best possible), the unbiased Hamiltonicity game within
$n+2$ moves and the unbiased $k$-connectivity game within
$kn/2+o(n)$ moves.

In \cite{HS}, it was shown that Maker can win the unbiased
Hamiltonicity game on $K_n$ within $n+1$ moves which is clearly the best
possible and recently it was proved (see \cite{FH}) that Maker can
win the unbiased $k$-connectivity game within $kn/2+1$ moves which
is clearly the best possible.

It follows from all these results that many natural games played on
the edge set of the complete graph $K_n$ are drastically in favor of
Maker. Hence, it is natural to try to make his life a bit harder and
to play on different types of boards or to limit his number of
moves. In this paper we are mainly interested in the following two
questions.

\begin{description}

\item [$(i)$] Given a sparse board $G=(V,E)$, can Maker win the game
played on this board?
\item [$(ii)$] How fast can Maker win this game?
\end{description}

In \cite{SS} it was suggested to play Maker-Breaker games on the
edge set of a random graph $G\sim \gnp$ and some games were examined
such as the perfect matching game, the Hamiltonicity game, the
connectivity game and the $k$-clique game.

Later on, in \cite{BFHK}, it was proved that the edge set of $G\sim
\gnp$ with $p=(1+o(1))\frac{\ln n}{n}$ is typically such that Maker
has a strategy to win the unbiased perfect matching game, the
Hamiltonicity game and the $k$-connectivity game. This is best
possible since $p=\frac{\ln n}{n}$ is the threshold probability for
the property of $\gnp$ having an isolated vertex. Moreover, the
proof in \cite{BFHK} is of a "hitting-time" type. That means, in the
random graph process, i.e. when adding one new edge randomly every
time, typically at the moment the graph reaches the needed minimum
degree for winning the desired game, Maker indeed can win this game.
For example, at the first time the graph process achieves minimum
degree 2 the board is typically such that Maker wins the perfect
matching game.

Another type of games is the following. Let $X$ be any finite set
and let ${\mathcal F}\subseteq 2^X$ be a family of subsets. In the
\emph{strong game} $(X,{\mathcal F})$, two players called \emph{Red}
and \emph{Blue}, take turns in claiming one previously unclaimed
element of $X$, with Red going first. The winner of this game is the
first player to fully claim all the elements of some $F\in {\mathcal
F}$. If no one wins by the time all the elements of $X$ are claimed,
then the game ends in a \emph{draw}. For example, the classic
Tic-Tac-Toe is such a game. It is well known from classic Game
Theory, that for every strong game $(X,{\mathcal F})$, either Red
has a winning strategy or Blue has a drawing strategy. For certain
games, a hypergraph coloring argument can be used to prove that a
draw is impossible and thus these games are won by Red. However,
these arguments are purely existential. That is, even if it is known
that Red has a winning strategy for some strong game $(X,{\mathcal
F})$, it might be very hard to describe such a strategy explicitly.

Using fast strategies for Maker in certain games, explicit
strategies for Red were given for games such as the perfect matching
game, the Hamiltonicity game and the $k$-connectivity game played on
the edge set of $K_n$ (see \cite{FH} and \cite{FH1}). This provides
substantial motivation for studying fast winning strategies in
Maker-Breaker games.

Regarding the strong game played on $G\sim \gnp$, nothing is known
yet. Hence, as a first step for finding explicit strategies for Red
in the strong game played on a random board, it is natural to look
for fast winning strategies for Maker in the analogous games.
Therefore the following question is quite natural.

\begin{description}
\item [$Question(s):$] Given $p=p(n)$, how fast can Maker win the perfect matching, the
Hamiltonicity and the $k$-connectivity games played on the edge set
of a random board $G\sim \gnp$?
\end{description}

In this paper we resolve these questions for a wide range of the
values of $p=p(n)$. We prove the following theorems:

\begin{theorem} \label{PMgame}
Let $b \geq 1$ be an integer, let $K >12$, $p=\frac{\ln^{K} n}{n}$,
and let $G\sim \gnp$. Then a.a.s. $G$ is such that in the $(1:b)$
weak game ${\mathcal M}_n(G)$, Maker has a strategy to win within
$\frac{n}{2}+o(n)$ moves.
\end{theorem}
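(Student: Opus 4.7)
The plan is to split Maker's moves into two phases. In Phase~I Maker builds an almost-perfect matching $M_1$ leaving only $r = o(n)$ vertices uncovered, using exactly $(n-r)/2$ moves. In Phase~II Maker completes $M_1$ to a perfect matching by performing $r/2$ short augmenting paths, each of bounded length $L=L(K)$, so Phase~II costs $O(rL) = o(n)$ extra Maker-moves. The total is $n/2 + o(n)$. Throughout, Breaker claims at most $b(n/2 + o(n)) = O(n)$ edges, which is $o(n \cdot np)$ and too few to disrupt the quasi-random structure of~$G$.

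Before describing the strategy I would record the a.a.s.\ properties of $G \sim \gnp$ with $p=\ln^K n / n$ that the argument rests on: (P1) every vertex has $G$-degree $(1 \pm o(1))np$; (P2) for every disjoint $A,B \subseteq V$ with $|A||B|p \gg n$, $|E_G(A,B)| = (1+o(1))|A||B|p$; (P3) for every $S$ with $|S| \leq n/\ln^2 n$, $|N_G(S)| \geq \ln^{K/2} n \cdot |S|$. All three are robust under removing any $O(n)$ edges.

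\textbf{Phase~I.} Maker maintains a matching $M$ with uncovered set $U$. At each turn he picks any $v \in U$ and claims a free $G$-edge from $v$ to some $w \in U\setminus\{v\}$. While $|U| \geq r := n/\ln^{K/3} n$, properties (P1)--(P2) give that $v$ has $(1-o(1))|U|p$ $G$-neighbors in $U$; only $o(|U|p)$ of these can have been claimed by Breaker, so the move succeeds. Phase~I ends after $(n-r)/2$ moves with $|U|=r$.

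\textbf{Phase~II.} For each $u \in U$ Maker finds an alternating augmenting path $u=x_0,x_1,\ldots,x_{2k+1}=u'$ with $u' \in U\setminus\{u\}$, $\{x_{2i+1},x_{2i+2}\}\in M$ and every $\{x_{2i},x_{2i+1}\}$ a free $G$-edge; here $2k+1 \leq L$ depends only on $K$. Maker claims the $k+1$ odd-indexed edges and drops the $k$ even-indexed ones, matching $u$ and $u'$ at cost $k+1$. Existence of such a path follows from iteratively applying (P3) to the free subgraph: starting from $u$, alternately take free $G$-neighbors and $M$-partners; after $O(\log_{np} n)$ steps the reachable set has size $\Omega(n/\ln^2 n)$ and must meet the remaining $U$. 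The main obstacle is Phase~II when $|U|$ has shrunk to a few vertices: one must show that the "available" bipartite graph between the $M$-partners of $u$'s free neighbors and the current $U$ still contains a free $G$-path, despite Breaker concentrating his moves there. This calls for a careful inductive invariant on the expansion of the available auxiliary graph plus a bound on how many of Breaker's $O(n)$ edges can sit at any one vertex. The hypothesis $K > 12$ is precisely the slack needed in (P3) to keep this invariant alive down to $|U|=0$.
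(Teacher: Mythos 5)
There is a genuine gap, and it is fatal to Phase~I as written. Your whole argument rests on the claim that Breaker's $O(n)$ edges cannot disrupt the structure of $G$, but with $p=\ln^K n/n$ every vertex has degree only $\Theta(\ln^K n)=o(n)$, so Breaker can simply spend $\Theta(\ln^K n/b)$ of his moves claiming \emph{all} edges incident to one vertex that Maker has not yet covered, isolating it and making a perfect matching impossible; he can in fact do this to polynomially many vertices. Your assertion in Phase~I that ``only $o(|U|p)$ of these can have been claimed by Breaker'' is exactly the false step: robustness of (P1)--(P3) under deleting $O(n)$ edges holds globally but not locally at a single vertex, and your greedy strategy contains no mechanism forcing Breaker to spread his edges out. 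This is precisely why the paper's Maker interleaves, every $\lfloor \ln n\rfloor$-th move, a degree game (via the Box game with resets, Claim~\ref{TheDegreeGame}) toward a reserved set $U_0$, guaranteeing that every vertex ends Stage~I with either $\Omega(\ln^{K-4}n)$ free edges or $\Omega(\ln^{K-6}n)$ Maker edges into $U_0$; without some such protection the time bound is moot because Maker does not win at all.

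The endgame is also not proved. You acknowledge that completing the last $r$ vertices against an adaptive Breaker who can concentrate on them is ``the main obstacle,'' but you only gesture at ``a careful inductive invariant'' plus a bound on Breaker's edges at any one vertex --- and that bound is again false unless Maker actively plays a degree game, since $\ln^K n \ll n$. There is the further issue that an augmenting path identified in the free graph need not survive the $b(k+1)$ Breaker moves made while Maker claims it edge by edge. The paper avoids all of this by a different endgame: it reserves the sublinear set $U_0$ untouched by the greedy matching, and on the leftover $\Theta(n/\ln^4 n)$ vertices Maker builds, in $o(n)$ moves, an $(R,c)$-expander on a randomly sparsified subgraph (Lemma~\ref{l:sparsergraph} and Theorem~\ref{th::FastExpGame}, playing a degree game and a Breaker-type pair-connection game in parallel), which is Hamilton-connected by Theorem~\ref{HamCon} and hence contains the missing matching. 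To repair your proposal you would need to import both ingredients: a degree-protection subroutine throughout Phase~I, and an actual proof (not an invariant sketch) that the leftover vertices can be matched in $o(n)$ further moves.
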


\begin{theorem} \label{HAMgame}
Let $b \geq 1$ be an integer, let $K >100$, $p=\frac{\ln^{K} n}{n}$,
and let $G\sim \gnp$. Then a.a.s. $G$ is such that in the $(1:b)$
weak game ${\mathcal H}_n(G)$, Maker has a strategy to win within
$n+o(n)$ moves.
\end{theorem}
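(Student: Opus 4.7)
The plan is to mimic the two-stage Hamiltonicity strategy used for $K_n$ (as in Hefetz--Stich) on the random board $G$. In Stage~I, Maker spends only $o(n)$ moves building a spanning subgraph $M_1\subseteq G$ of bounded maximum degree satisfying a Pósa-type expansion condition: every vertex has degree at least $2$ in $M_1$ and every $S\subseteq V(G)$ with $|S|\le n/4$ satisfies $|N_{M_1}(S)|\ge 2|S|$. In Stage~II, Maker uses each of his remaining $n+o(n)$ moves to claim a \emph{booster}, i.e.\ a non-edge whose addition to his current graph either increases the length of a longest path by one, or turns a Hamilton path into a Hamilton cycle. Since every booster strictly improves the longest-path length, after at most $n$ boosters are added Maker's graph is Hamiltonian, so the total move count is $o(n)+n = n+o(n)$.

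For Stage~I, I would first record the typical properties of $G\sim \gnp$ for $p=\ln^K n/n$ with $K>100$: a.a.s.\ $\delta(G)\ge (1-o(1))np$, $G$ itself is a very good expander on small sets, and $G$ has near-uniform edge distribution between pairs of linear-sized subsets. Using these, one can adapt the standard Maker-Breaker techniques for building expanders to the random board: for instance, combine a degree game so that Maker obtains $\Theta(1)$ edges at every vertex with a small-set expansion strategy, both executed on $G$ rather than on $K_n$. A careful accounting lets Maker complete $M_1$ within $O(n/\log\log n)=o(n)$ moves, even against the $(1:b)$ bias.

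For Stage~II the crucial ingredient is a booster lemma for random graphs. The classical Pósa rotation-extension argument guarantees that if Maker's graph $M\supseteq M_1$ is a good expander and is not yet Hamiltonian, then $K_n$ contains at least $\alpha n^2$ boosters for $M$, for some absolute constant $\alpha>0$. What is needed here is that at each step this set of $\Omega(n^2)$ boosters contains an edge of $G$ that is still unclaimed. Using the near-uniform edge distribution of $G$, every set of $\alpha n^2$ vertex pairs typically contains $\Omega(n^2 p)=\Omega(n\ln^K n)$ edges of $G$. Breaker claims at most $b(n+o(n))=O(n)$ edges in total, and Maker also claims at most $n+o(n)$ edges, so the set of available boosters in $G$ is never empty. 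Maker plays one such booster per turn and thereby builds a Hamilton cycle within $n+o(n)$ moves.

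The main obstacle will be the booster step in Stage~II. The set of edges of $G$ that have been chosen by either player may depend in complicated ways on the revealed random board, so the edge-distribution property of $G$ has to be strong enough to remain useful after the deletion of $O(n)$ edges and after conditioning on Maker's and Breaker's play. Formulating the right quantitative pseudo-randomness of $G$, and then checking availability of a booster along the online play, is the delicate part of the argument and is what forces the exponent $K$ in $p=\ln^K n/n$ to be taken relatively large. Once this is in place, the rest is a routine combination of the expander-building strategy of Stage~I with the booster-counting argument of Stage~II.
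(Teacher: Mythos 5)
Your Stage~I as stated cannot work: if $M_1$ is a \emph{spanning} subgraph with every vertex of degree at least $2$, then $|E(M_1)|\geq n$, so Maker needs at least $n$ moves to build it, not $o(n)$. This is an internal contradiction that breaks the move accounting at the outset. And even if you allocate $n+o(n)$ moves to Stage~I instead, the budget still does not close: a bounded-degree Pósa expander with expansion $|N_{M_1}(S)|\geq 2|S|$ for $|S|\leq n/4$ only guarantees (via the rotation--extension lemma) a longest path of length roughly $3n/4$, so Stage~II would then require $\Omega(n)$ boosters, giving at least $(1+\Omega(1))n$ moves in total. To make the booster count $o(n)$ you would need $M_1$ to already contain a path on $n-o(n)$ vertices, which a bounded-degree spanning expander need not have (bounded-degree expanders are generally far from Hamilton-connected). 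So the two stages, as split, cannot both be cheap and cannot jointly give $n+o(n)$.

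The paper avoids exactly this tension by building the near-Hamilton path \emph{directly and cheaply per edge}, and reserving the expensive expander only for a residual set of sublinear size. Concretely: (Stage~I) Maker first builds a matching $M_0$ of size $n/2-o(n)$ greedily while interleaving a degree game on a preselected small set $U_0$; (Stage~II) Maker merges matching edges into vertex-disjoint paths until only $n/\ln^{K/3}n$ paths remain; (Stage~III) Maker uses a Beck-criterion biased game together with an auxiliary directed graph and Lemma~\ref{l:LongDirectedPath} to link the long paths near their endpoints into a single path covering all but $n/\ln^4n$ vertices, in $o(n)$ moves; (Stage~IV) Maker applies Theorem~\ref{th::FastExpGame} to build an $(R,c)$-expander on the residual set $V_H$ of size $\Theta(n/\ln^4n)$, which by Theorem~\ref{HamCon} is Hamilton-connected, and this costs only $o(n)$ edges because $|V_H|\cdot\ln\ln|V_H|=o(n)$. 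The per-edge efficiency throughout (essentially one ``useful'' edge per move in Stages~I--II, $o(n)$ moves in Stages~III--IV) is what delivers $n+o(n)$. There is no booster step anywhere, and this is not incidental: a booster argument played online would have to survive Breaker's adaptive deletion of boosters, which is exactly the difficulty you flag at the end of your write-up and which the paper sidesteps entirely by never relying on boosters.

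If you want to pursue a booster-style proof, you would need to restructure so that the expander lives on a set of size $o(n)$ (not all of $V$), build the long path first and cheaply, and only then use the expander or boosters to finish; at that point you have essentially rediscovered the paper's Stages~III--IV. As written, the proposal has a gap that cannot be patched without changing the overall architecture.
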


\begin{theorem} \label{KCONgame}
Let $b\geq 1$, $k\geq 2$ be two integers, let ${K}>100$,
$p=\frac{\ln^K n}{n}$,  and let $G\sim \gnp$. Then a.a.s. $G$ is
such that in the $(1:b)$ weak game ${\mathcal C}_n^k(G)$, Maker has
a strategy to win within $\frac{kn}{2}+o(n)$ moves.
\end{theorem}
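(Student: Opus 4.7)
The lower bound $kn/2$ is immediate: any $k$-connected graph on $n$ vertices has minimum degree at least $k$, hence at least $kn/2$ edges. For the upper bound, the plan is to iterate the Hamiltonicity strategy of Theorem~\ref{HAMgame} for $\lceil k/2 \rceil$ phases to produce $\lceil k/2 \rceil$ edge-disjoint Hamilton cycles, adding (when $k$ is odd) a near-perfect matching phase via Theorem~\ref{PMgame}, and finishing with a short ``booster'' clean-up phase.

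The first task is to verify that $G\sim\gnp$ with $p=\ln^K n/n$, $K>100$, a.a.s.\ satisfies the pseudo-random properties required by the Hamiltonicity strategy: every vertex has degree $(1+o(1))np$; every set $S\subseteq V$ with $|S|\le n/\ln^3 n$ satisfies $|N_G(S)|\ge np|S|/2$; every pair of disjoint $A,B$ with $|A|,|B|\ge n/\ln^2 n$ spans $(1+o(1))|A||B|p$ edges. Crucially these properties persist under deletion of any subgraph of maximum degree $O(k)$, so earlier Maker and Breaker edges do not spoil the hypotheses. In phase~$i$ ($1\le i\le\lceil k/2\rceil$), Maker plays Theorem~\ref{HAMgame}'s strategy on the remaining board $G_i:=G\setminus(M_{<i}\cup B_{<i})$, winning a new Hamilton cycle $C_i$ edge-disjoint from the previous ones in $n+o(n)$ moves; if $k$ is odd, an additional phase applies Theorem~\ref{PMgame} for a near-perfect matching in $n/2+o(n)$ moves. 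The accumulated move count is $kn/2+o(n)$.

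The main obstacle is upgrading the trivial consequence $\delta(H)\ge k$ of the union $H=C_1\cup\cdots\cup C_{\lceil k/2\rceil}$ to genuine $k$-connectivity. I would handle this with a final sub-phase of $o(n)$ moves dedicated to claiming \emph{boosters}: edges $e\in E(G)\setminus E(H)$ whose addition strictly increases the vertex connectivity of $H$, until $H$ becomes $k$-connected. The key combinatorial claim is that because each $C_i$ is built via an expander-based strategy, the union $H$ inherits $|N_H(S)\setminus S|\ge k$ for every $S\subset V(H)$ with $|S|\le (n-k)/2$ up to a small deficit, so any failure of $k$-connectivity is witnessed by a small vertex cut; this forces $\Omega(n^2 p)$ booster candidates to lie in the unclaimed portion of $G$. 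Since $n^2 p\gg b$, a standard box-game argument lets Maker grab enough boosters in $o(n)$ further moves to reach $k$-connectivity. The delicate technical point, and where I expect to spend most effort, is ensuring that the expansion guarantees of individual Hamiltonicity phases combine across phases without degrading the booster count; alternatively, Stage~1 of a single Hamiltonicity phase could be strengthened so that its expander sub-structure alone suffices for $k$-connectivity of the eventual union.
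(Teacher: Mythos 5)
Your proposal takes a genuinely different route from the paper, but it has two concrete gaps, either of which is fatal as written. First, the iteration of Theorem~\ref{HAMgame} across phases is not justified. That theorem is a statement about a board $G\sim\gnp$; in phase $i\geq 2$ the board is $G\setminus(M_{<i}\cup B_{<i})$, which is not random and, more importantly, has been damaged adversarially. Your persistence claim covers deletion of a subgraph of maximum degree $O(k)$, but Breaker's edges form no such subgraph: during phase~1 Breaker has roughly $bn$ edges at his disposal while every vertex has degree $\Theta(\ln^K n)$, so he can spend them saturating all but the two cycle edges at $\omega(1)$ chosen vertices. Those vertices then have no free incident edges in phase~2, so no second (edge-disjoint) Hamilton cycle exists in the remaining board at all, regardless of strategy. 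Some form of interleaving or board-splitting is needed precisely to prevent Breaker from mortgaging future phases, and your sequential scheme does not provide it. (There is also a small arithmetic slip: for odd $k$ you want $\lfloor k/2\rfloor$ cycles plus a matching, not $\lceil k/2\rceil$ cycles plus a matching, otherwise the move count already exceeds $kn/2+o(n)$.)

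Second, the connectivity upgrade is asserted rather than proved, and the assertion rests on a false premise. The strategy of Theorem~\ref{HAMgame} does not endow the final Hamilton cycle with any expansion: the expander is built only inside a set of $O(n/\ln^4 n)$ vertices in the last stage, while the bulk of the cycle is assembled greedily from paths, so the union $H$ of the cycles does not ``inherit'' $|N_H(S)\setminus S|\geq k$ for small $S$, and the claimed $\Omega(n^2p)$ supply of boosters, as well as the box-game argument for capturing them against Breaker, would all have to be developed from scratch. The paper avoids both difficulties by a different decomposition: partition $V$ into $k-1$ parts of size $n/(k-1)$ (plus a bounded leftover $W$), play the Hamiltonicity game on each part, the bipartite perfect matching game (Theorem~\ref{PMGameBip}) between every pair of parts, and a degree game for each $w\in W$, interleaving the $O(k^2)$ boards and using the trick of fake moves so each sub-board is a genuine random (bipartite) graph; $k$-connectivity of the union is then immediate from the deterministic Lemma~\ref{lem::kconnected}, and the move count is $(k-1)\frac{n}{k-1}+\binom{k-1}{2}\frac{n}{k-1}+o(n)=\frac{kn}{2}+o(n)$. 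If you want to salvage your approach, you would need both a version of the Hamiltonicity theorem robust to an adversarially depleted board and a genuine booster lemma; as it stands, the proof is incomplete at its two central steps.
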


Due to obvious monotonicity the results are valid for any $p=p(n)$
larger than stated in the theorems above.

For the sake of simplicity and clarity of presentation, we do not
make a particular effort to optimize the constants obtained in our
proofs. We do not believe that the order of magnitude we assume for
$p$ in the above theorems is optimal. We also omit floor and ceiling
signs whenever these are not crucial. Most of our results are
asymptotic in nature and whenever necessary we assume that $n$ is
sufficiently large.

The remaining part of the paper is organized as follows. First, we
introduce the necessary notation. In Section \ref{sec::tools}, we
assemble several results that we need. We give some basic results of
positional games in Section \ref{subsec::games}, of graph theory in
Section \ref{subsec::graphtheory}, and about $\gnp$ in Section
\ref{subsec::gnp}. The strategy of Maker (in each of the three
games) includes building a suitable expander on a subgraph, which
then contains the desired structure. We therefore include results
about expanders in Section \ref{subsec::expanders}. We prove Theorem
\ref{PMgame}, \ref{HAMgame} and \ref{KCONgame} in Sections
\ref{sec::PMgame}, \ref{sec::HAMgame} and \ref{sec::KCONgame},
respectively. Finally, in Section \ref{sec::OpenProblems} we pose
some open problems connected to our results.
%s
\subsection{Notation and terminology} \label{subsec::prelim}
\noindent Our graph-theoretic notation is standard and follows that
of ~\cite{West}. In particular, we use the following.

For a graph $G$, let $V(G)$ and $E(G)$ denote its sets of vertices
and edges, respectively.
Let $S,T \subseteq V(G)$ be subsets.
Let $G[S]$ denote the subgraph
of $G$, induced on the vertices of $S$, and let
$E(S) = E(G[S])$.
Further, let $E(S,T) := \{ st \in E(G) : s \in S, t \in T \}$,
and let $N(S):= \{ v \in V : \exists s \in S \text{ s.t. } vs \in E(G) \}$
denote the neighborhood of $S$.
For an edge $e \in E(G)$ we
denote by $G - e$ the graph with vertex set $V(G)$ and edge set
$E(G)\setminus \{e\}$.

Assume that some Maker-Breaker game, played on the edge set of some
graph $G$, is in progress. At any given moment during the game, we
denote the graph formed by Maker's edges by $M$, and the graph
formed by Breaker's edges by $B$. At any point during the game, the
edges of $F:= G \setminus (M\cup B)$ are called \emph{free edges}.
For subsets $S,T \subseteq V$, let $N_M(S) := \{ v \in V : \exists s
\in S \text{ s.t. } vs \in E(M) \}$, $E_M(S) := E(S) \cap E(M)$, and
$E_M(S,T) := E(S,T) \cap E(M)$. Further, for $v\in V$, let $d_M
(v,S) = |E_M(\{v\}, S)|$, and $d_M(v) := d_M(v,V)$. Analogously, we
define $N_B(S)$, $N_F(S)$, $E_B(S)$, $E_F(S)$, etc.

\section{Auxiliary results}
\label{sec::tools}

In this section we present some auxiliary results that will be used
throughout the paper.

First,  we will need to employ bounds on large deviations of random
variables. We will mostly use the following well-known bound on the
lower and the upper tails of the Binomial distribution due to
Chernoff (see \cite{AS}, \cite{JLR}).

\begin{lemma}\label{Che}
If $X \sim Bin(n,p)$, then
\begin{itemize}
    \item $\Prob[X<(1-a)np]<exp\left(-\frac{a^2np}{2}\right)$ for every $a>0.$
    \item $\Prob[X>(1+a)np]<exp\left(-\frac{np}{3}\right)$ for every $a\geq 1.$
\end{itemize}
\end{lemma}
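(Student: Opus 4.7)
The plan is to apply the standard exponential moment method (Chernoff's method), which is how both bounds are derived in the references \cite{AS} and \cite{JLR}. First I would write $X$ as a sum $X = \sum_{i=1}^n X_i$ of independent Bernoulli$(p)$ variables, so that the moment generating function factorises as $\Exp[e^{tX}] = (1-p+pe^t)^n$, which by the elementary inequality $1+x \leq e^x$ is bounded above by $\exp(np(e^t-1))$ for every real $t$. This uniform bound on the MGF is what drives both tails.

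For the lower tail, Markov's inequality applied to $e^{-sX}$ with $s>0$ yields
\[
\Prob[X < (1-a)np] \leq e^{s(1-a)np}\cdot\Exp[e^{-sX}] \leq \exp\bigl(s(1-a)np + np(e^{-s}-1)\bigr).
\]
I would then choose $s = -\ln(1-a) > 0$, which turns the exponent into $-np\bigl[(1-a)\ln(1-a) + a\bigr]$. The remaining step is the elementary convexity inequality $(1-a)\ln(1-a) + a \geq a^2/2$ for $a \in (0,1)$, obtainable from the Taylor expansion $(1-a)\ln(1-a)+a = \sum_{k\geq 2}a^k/(k(k-1))$ whose leading term is exactly $a^2/2$. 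For $a \geq 1$ the claim is trivial since $(1-a)np \leq 0$ while $X\geq 0$ almost surely.

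For the upper tail with $a \geq 1$, the symmetric approach, applying Markov to $e^{tX}$ with the optimal choice $t = \ln(1+a)>0$, gives
\[
\Prob[X > (1+a)np] \leq \exp\bigl(-np\bigl[(1+a)\ln(1+a) - a\bigr]\bigr).
\]
Setting $f(a) = (1+a)\ln(1+a) - a$, I would note that $f'(a) = \ln(1+a) > 0$, so $f$ is strictly increasing on $[1,\infty)$. Hence $f(a) \geq f(1) = 2\ln 2 - 1 > 1/3$ for all $a \geq 1$, giving the stated bound $\exp(-np/3)$.

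The argument is essentially routine, and the main (mild) obstacle is verifying the two real-analytic inequalities $(1-a)\ln(1-a) + a \geq a^2/2$ on $(0,1)$ and $(1+a)\ln(1+a) - a \geq 1/3$ on $[1,\infty)$; both reduce to elementary convexity considerations or direct Taylor series analysis, so no genuinely difficult step arises.
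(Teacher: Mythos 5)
Your proof is correct: the exponential-moment (Chernoff) method with the optimal choices $s=-\ln(1-a)$ and $t=\ln(1+a)$, together with the elementary estimates $(1-a)\ln(1-a)+a\geq a^2/2$ and $(1+a)\ln(1+a)-a\geq 2\ln 2-1>1/3$, does yield both stated bounds. The paper gives no proof of this lemma and simply cites the standard references, where exactly this argument is carried out, so your derivation matches the intended one.
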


\begin{lemma}\label{l:Che}
Let $X \sim Bin(n,p)$, $\mu=\Exp(X)$ and $k \geq 7\mu$, then
$\Prob(X \geq k) \leq e^{-k}$.
\end{lemma}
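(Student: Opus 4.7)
The plan is to derive this directly from the standard Chernoff bound via the moment generating function. For any $\lambda>0$, by Markov's inequality applied to $e^{\lambda X}$,
\[
\Pr[X\geq k]\leq e^{-\lambda k}\,\Ex[e^{\lambda X}]=e^{-\lambda k}(1-p+pe^{\lambda})^{n}\leq e^{-\lambda k+\mu(e^{\lambda}-1)},
\]
using the elementary inequality $1+x\leq e^{x}$. I would then optimise by setting $\lambda=\ln(k/\mu)>0$, which is permitted since $k\geq 7\mu>\mu$. Substituting this choice yields
\[
\Pr[X\geq k]\leq \exp\!\bigl(-k\ln(k/\mu)+k-\mu\bigr).
\]

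The remaining task is to check that the exponent is at most $-k$ under the hypothesis $k\geq 7\mu$. Rearranging, this is equivalent to $\ln(k/\mu)\geq 2-\mu/k$. Writing $r=k/\mu\geq 7$, I would need $\ln r\geq 2-1/r$. At $r=7$ one has $\ln 7\approx 1.946$ while $2-1/7\approx 1.857$, so the inequality holds there. Moreover, the function $f(r)=\ln r-2+1/r$ satisfies $f'(r)=1/r-1/r^{2}>0$ for $r>1$, so $f$ is increasing on $[7,\infty)$ and therefore $f(r)\geq f(7)>0$ for all $r\geq 7$. Combining this with the Chernoff estimate above gives $\Pr[X\geq k]\leq e^{-k}$, as required.

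The only subtle point (and the reason one cannot simply quote the cruder bound $\Pr[X\geq k]\leq(e\mu/k)^{k}$ obtained from $\binom{n}{k}p^{k}\leq(en/k)^{k}p^{k}$) is that the threshold $7$ is slightly \emph{below} $e^{2}\approx 7.389$; with only the $(e\mu/k)^{k}$ factor one would need $k\geq e^{2}\mu$. It is precisely the additional $e^{-\mu}$ factor coming from the sharper Chernoff bound above that allows the constant to be pushed down to $7$, and the numerical check at $r=7$ is the only place where this sharper bound is actually needed.
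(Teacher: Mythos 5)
Your proof is correct. The paper itself states this lemma without proof, merely citing standard references for Chernoff-type bounds, so there is no in-paper argument to compare against. Your derivation via the moment generating function — bounding $\Pr[X\geq k]\leq e^{-\lambda k}(1-p+pe^{\lambda})^{n}\leq \exp(-\lambda k+\mu(e^{\lambda}-1))$, optimising at $\lambda=\ln(k/\mu)$, and reducing the exponent check to $\ln r\geq 2-1/r$ for $r\geq 7$ — is the standard route and every step checks out: $\ln 7\approx 1.946>2-1/7\approx 1.857$, and $f(r)=\ln r-2+1/r$ has $f'(r)=(r-1)/r^{2}>0$ for $r>1$, so the inequality persists for all $r\geq 7$. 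Your closing remark is also a genuinely useful observation: since $7<e^{2}\approx 7.389$, the cruder bound $(e\mu/k)^{k}$ would not suffice, and the extra $e^{-\mu}$ factor from the full Chernoff bound is essential to get the constant down to $7$. The only (cosmetic) caveat is that when $\mu=0$ the quantity $\ln(k/\mu)$ is undefined, but in that degenerate case the statement is trivially true, so no harm is done; you might add a one-line dispatch of that case for completeness.
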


\subsection{Basic positional games results}
\label{subsec::games}
The following fundamental theorem, due to Beck~\cite{Beck}, is a
useful sufficient condition for Breaker's win in the $(a:b)$ game
$(X, {\mathcal F})$. It will be used extensively throughout the
paper.
\begin{theorem} \label{bwin}
Let $X$ be a finite set and let ${\mathcal F} \subseteq 2^X$. If
$\sum_{F \in {\mathcal F}}(1+b)^{-|F| / a} < \frac{1}{1+b}$, then
Breaker (as the first or second player) has a winning strategy for
the $(a : b)$ game $(X, {\mathcal F})$.
\end{theorem}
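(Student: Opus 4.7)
The plan is to adapt the classical Erd\H{o}s--Selfridge weight-function argument to the biased $(a:b)$ setting, which is the standard route to Beck's criterion. Since being the second player is never a handicap in a Maker-Breaker game, I may assume Breaker moves first. At any moment during the game, with $M$ and $B$ the sets of elements claimed by Maker and Breaker, I assign to each $F \in \mathcal{F}$ the weight
\[
w(F) = \begin{cases} (1+b)^{-|F \setminus M|/a}, & \text{if } F \cap B = \emptyset, \\ 0, & \text{otherwise,} \end{cases}
\]
and set $T = \sum_F w(F)$. If Maker ever occupies some $F$ entirely, then $|F \setminus M| = 0$ and $F \cap B = \emptyset$, so $w(F) = 1$ and $T \geq 1$. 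The hypothesis reads $T_0 < 1/(1+b) < 1$ at the start of the game, so it suffices to give Breaker a strategy that keeps $T < 1$ throughout.

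The strategy is greedy: each time Breaker needs to pick one of his $b$ elements, he selects a free $x$ maximising the danger $d(x) = \sum_{F \ni x,\, F \cap B = \emptyset} w(F)$, then updates the weights and iterates. A single Maker pick of $y$ multiplies $w(F)$ by $(1+b)^{1/a}$ on every alive $F \ni y$, inflating $T$ by $((1+b)^{1/a} - 1)\, d(y)$; a single Breaker pick of $x$ kills every alive $F \ni x$, so $T$ drops by $d(x)$. Pairing Maker's $a$ moves against Breaker's $b$ moves and invoking the greedy property together with the elementary inequality $a\bigl((1+b)^{1/a} - 1\bigr) \leq b$ (which follows from concavity of $t \mapsto t^{1/a}$ for $a \geq 1$), one shows that $T$ does not grow from one full round to the next.

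To finish, one also needs $T < 1$ in the middle of a round, since Maker might complete a winning set on one of his early picks. This is handled by noting that within a single round $T$ can grow by at most a factor of $1+b$ (the worst case being all $a$ of Maker's picks landing in the same winning set), so starting from $T \leq T_0 < 1/(1+b)$ one gets $T < 1$ for the whole round, and the round-to-round non-increase then propagates forever. The main obstacle is the bookkeeping in the pairing step: the exponent $1/a$ in the definition of $w$, matched with the base $1+b$, is precisely what makes the Maker-gain and Breaker-loss telescope, and getting this telescoping correct in full detail is the only point requiring care.
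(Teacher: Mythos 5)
The paper does not prove Theorem~\ref{bwin} (it is cited from \cite{Beck}), so there is no in-paper argument to compare against; your plan is the standard biased Erd\H{o}s--Selfridge potential argument with a greedy Breaker, which is the right tool. However, the reduction ``being the second player is never a handicap, so I may assume Breaker moves first'' is backwards and discards the hard case. What is true (and is stated in the Introduction) is that being the \emph{first} player is never a disadvantage; hence if Breaker can win as the \emph{second} player he can also win as the first, but not conversely. To prove the claim ``as the first or second player'' you must therefore handle Breaker moving second, i.e.\ Maker opening. The potential argument does cover this case --- after Maker's opening turn the potential is still below $1$ thanks to the $(1+b)$ slack, and one then pairs Breaker's $k$-th turn with Maker's $(k{+}1)$-st --- but your reduction as written skips exactly the case the theorem requires.

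Second, the telescoping you defer to ``bookkeeping'' hides a point that the quoted inequality $a\big((1+b)^{1/a}-1\big)\le b$ does not by itself settle. Maker's later picks \emph{within a single turn} see dangers that have already been inflated by his earlier picks of the same turn, so the per-pick gains $\big((1+b)^{1/a}-1\big)\,d_i(y_i)$ cannot simply be matched one-for-one against Breaker's greedy drops using that inequality alone. The fix is to account for Maker's entire turn per winning set: the total gain equals $\sum_F w(F)\big((1+b)^{m_F/a}-1\big)$, where $m_F$ is the number of Maker's new picks landing in $F$, and one then uses the chord bound $(1+b)^{m/a}-1 \le (m/a)\,b$ for all $0\le m\le a$ (your inequality is the special case $m=1$); only after this reduction does the greedy comparison with $\sum_j d(x_j)$ close the estimate.
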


While Theorem~\ref{bwin} is useful in proving that Breaker wins a
certain game, it does not show that he wins this game quickly. The
following lemma is helpful in this respect.

\begin{lemma} [Trick of fake moves] \label{lem::fakeMoves}
Let $X$ be a finite set and let ${\mathcal F} \subseteq 2^X$. Let
$b' < b$ be positive integers. If Maker has a winning strategy for
the $(1 : b)$ game $(X, {\mathcal F})$, then he has a strategy to
win the $(1 : b')$ game $(X, {\mathcal F})$ within $1 + |X|/(b+1)$
moves.
\end{lemma}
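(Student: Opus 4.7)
The plan is to use the classical \emph{strategy stealing via fake moves} argument. I would fix a winning strategy $\mathcal{S}$ for Maker in the $(1:b)$ game on $(X,\mathcal{F})$ and have Maker play the real $(1:b')$ game while maintaining an imagined $(1:b)$ game on the side. In the imagined game Maker's own moves will coincide with his real moves, but imagined Breaker will be a strengthened version of the real Breaker.

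More concretely, in each round I will let the real Breaker claim his $b'$ elements and then have Maker choose $b - b'$ further elements, still free in the imagined game, and pretend that imagined Breaker has claimed them too; these are the \emph{fake moves}. Maker then consults $\mathcal{S}$ in the imagined game and plays the prescribed element in the real game as well. Two things need to be checked. First, the element dictated by $\mathcal{S}$ must actually be free in the real game: this is immediate, since at every moment the set of elements claimed in the imagined game contains the set claimed in the real game (Maker's claims coincide and imagined Breaker's claims contain real Breaker's), so any element free in the imagined game is also free in the real one. Second, the fake moves must always be selectable: if at some round the imagined game has fewer than $b - b'$ free elements left, then the imagined game is essentially over and, as $\mathcal{S}$ is winning, Maker has already completed a winning set, so we are done.

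The bound on the number of moves then follows by counting. After Maker's $k$-th move in the imagined game at most $k(b+1)$ elements of $X$ have been claimed there, so once $k > |X|/(b+1)$ the imagined game must have ended. Since $\mathcal{S}$ is winning, Maker completes some $F \in \mathcal{F}$ no later than his $\lceil |X|/(b+1) \rceil$-th move, and hence also in the real game, because his claims agree in both games. This yields the stated bound of $1 + |X|/(b+1)$. There is no real obstacle anywhere in the argument; the only thing to be careful about is keeping the two simulated games consistent, which is handled by the inclusion of claim sets noted above.
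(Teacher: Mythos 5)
Your proposal is correct and follows exactly the approach the paper sketches: Maker mentally augments the real Breaker's $b'$ elements each round with $b-b'$ ``fake'' elements to simulate a $(1:b)$ game, plays according to a winning $(1:b)$ strategy, and the move count follows because each imagined round consumes $b+1$ board elements. The paper itself only states this idea in one sentence and defers the straightforward details to Beck's book; your write-up supplies those details (consistency of the simulated game, legality of the prescribed move in the real game, and the counting) in the same spirit.
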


The main idea of the proof of Lemma~\ref{lem::fakeMoves} is that, in
every move of the $(1 : b')$ game $(X, {\mathcal F})$, Maker (in his
mind) gives Breaker $b - b'$ additional board elements. The
straightforward details can be found in~\cite{BeckBook}.

We will also use a variant of the classical \emph{Box Game}
first introduced by Chv\'atal and Erd\H{o}s in \cite{CE}. The {\em
Box Game with resets $rBox(m,b)$}, first studied in \cite{FHK}, is
played by two players, called BoxMaker and BoxBreaker. They play on
a hypergraph ${\mathcal H} = \{A_1, \ldots, A_m\}$, where the sets
$A_i$ are pairwise disjoint. BoxMaker claims $q$ elements of
$\bigcup_{i=1}^m A_i$ per turn, and then BoxBreaker responds by {\em
resetting} one of BoxMaker's \emph{boxes}, that is, by deleting all
of BoxMaker's elements from the chosen hyperedge $A_i$. Note that
the chosen box does {\em not} leave the game. At every point during
the game, and for every $1 \leq i \leq m$, we define \emph{the
weight} of box $A_i$ to be the number of BoxMaker's elements that
are currently in $A_i$, that is, the number of elements of $A_i$
that were claimed by BoxMaker and have not been deleted yet by
BoxBreaker.

\begin{theorem} [Theorem 2.3 in \cite{FHK}] \label{box1}
For every integer $k \geq 1$, BoxBreaker has a strategy for the game
$rBox(m,b)$ which ensures that, at any point during the first $k$
rounds of the game, every box $A_i$ has weight at most $b(1 +
\ln(m+k))$.
\end{theorem}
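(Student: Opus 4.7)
The plan is to have BoxBreaker always reset the box of current maximum weight, and to bound the damage via the exponential potential
\[
\Phi^{(t)} \;:=\; \sum_{i=1}^m 2^{w_i^{(t)}/b},
\]
where $w_i^{(t)}$ is the weight of $A_i$ immediately after BoxBreaker's $t$-th move. Initially $\Phi^{(0)}=m$. The core claim I would prove is the one-step inequality $\Phi^{(t+1)}\leq \Phi^{(t)}+1$ for every $t<k$; iterating gives $\Phi^{(k)}\leq m+k$, and since each summand is at most the total, this yields $2^{w_i^{(k)}/b}\leq m+k$ for every box $A_i$.

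For the one-step inequality, set $\alpha := \max_i 2^{w_i^{(t)}/b}$ and write $\hat\Phi$ for the potential just after BoxMaker's move in round $t+1$. BoxMaker chooses $x_i\geq 0$ with $\sum_i x_i=b$ and adds $x_i$ tokens to box $A_i$. Since $x_i/b\in[0,1]$, the convexity bound $2^{y}-1\leq y$ on $[0,1]$ gives
\[
\hat\Phi-\Phi^{(t)} \;=\; \sum_i 2^{w_i^{(t)}/b}\bigl(2^{x_i/b}-1\bigr) \;\leq\; \sum_i 2^{w_i^{(t)}/b}\cdot \frac{x_i}{b} \;\leq\; \alpha\cdot\frac{1}{b}\sum_i x_i \;=\; \alpha.
\]
BoxBreaker then resets the box of new maximum weight, whose contribution to $\hat\Phi$ is at least $\alpha$ (because adding non-negative amounts cannot decrease the maximum) and is replaced by $2^0=1$. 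Combining, $\Phi^{(t+1)}-\Phi^{(t)}\leq \alpha - \alpha + 1 = 1$, as needed.

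Taking logarithms in $2^{w_i^{(k)}/b}\leq m+k$ yields $w_i^{(k)}\leq b\log_2(m+k)$, which already has the stated order of magnitude; recovering the exact constant $b(1+\ln(m+k))$ of the theorem calls for replacing the base-$2$ potential with one tailored to base $e$ and separately accounting for the at most $b$ tokens that BoxMaker can add within a single round, so that the bound applies \emph{during} each round and not only between rounds. I expect the main subtlety to be that BoxMaker cannot win by any obvious local strategy: concentrating all tokens in the current maximum merely hastens their deletion, while spreading them evenly across all $m$ boxes wastes them on low-weight terms. His genuinely best play is to equalise the top few weights, and it is precisely the convexity estimate $2^{y}-1\leq y$ that handles every distribution uniformly, letting the argument bypass this case analysis.
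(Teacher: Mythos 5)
The paper itself does not prove this statement --- it is quoted as Theorem 2.3 of \cite{FHK} --- so your argument can only be judged on its own merits. Your base-$2$ potential argument is correct as far as it goes: the one-step bound $\Phi^{(t+1)}\leq\Phi^{(t)}+1$ is valid, and together with the ``$+b$ within a round'' observation it gives weight at most $b(1+\log_2(m+k))$ at any point during the first $k$ rounds. But this is weaker than the stated bound by a factor $1/\ln 2\approx 1.44$ inside the logarithm, and the repair you defer to --- ``replace the base-$2$ potential with one tailored to base $e$'' --- does not go through as described: your key estimate $2^{y}-1\leq y$ on $[0,1]$ becomes $e^{y}-1\leq y$, which is false for every $y>0$. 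So the one step where the claimed constant is actually decided is exactly the step you have not done, and the sketched fix, taken literally, breaks. (For how the theorem is used in this paper, in Claim \ref{TheDegreeGame}, the $\log_2$ version would suffice after adjusting constants, but as a proof of the stated theorem it falls short.)

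The gap is repairable inside your framework, but it needs a different inequality, not just a change of base. Take $\Phi^{(t)}=\sum_i e^{w_i^{(t)}/b}$, keep the strategy of resetting the heaviest box, and let $M:=\max_i e^{(w_i^{(t)}+x_i)/b}$ be the largest contribution \emph{after} BoxMaker's move. For every box, since $e^{(w_i^{(t)}+x_i)/b}\leq M$,
\begin{equation*}
e^{w_i^{(t)}/b}\left(e^{x_i/b}-1\right)=e^{(w_i^{(t)}+x_i)/b}\left(1-e^{-x_i/b}\right)\leq M\cdot\frac{x_i}{b},
\end{equation*}
using $1-e^{-u}\leq u$; summing over $i$ gives $\hat\Phi-\Phi^{(t)}\leq M$, and resetting the box attaining $M$ yields $\Phi^{(t+1)}\leq\Phi^{(t)}+1$ again. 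Note that here the increase must be compared with the maximum \emph{after} BoxMaker's move rather than with your $\alpha$ (in base $2$ the two are interchangeable, in base $e$ they are not). Iterating gives $w_i\leq b\ln(m+k)$ after each of BoxBreaker's moves, hence at most $b(1+\ln(m+k))$ at any point during the first $k$ rounds, which is the exact statement. With this substitution your proof is complete; the closing speculation about BoxMaker's optimal play is not needed, since the potential argument is uniform over all of his distributions.
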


We will use this theorem as a strategy for Maker to obtain some minimum degree in his graph.
To that end, let $G=(V,E)$ be some graph, and let
$V_1,V_2 \subseteq V$ be arbitrary subsets.
By $(1:b) - \Deg (V_1,V_2)$ we denote the $(1:b)$ positional game
where Maker tries  to get
a large degree $d_M (v, V_2)$ for every $v \in V_1$.
Occasionally, we shall simply refer back to this as the {\em degree game}.
The following is an immediate conclusion of Theorem
\ref{box1}.

\begin{claim} [The degree game] \label{TheDegreeGame}
Let $G=(V,E)$ be a graph on $|V|=n$ vertices,
$V_1,V_2 \subseteq V$,
 and let $b$ be an
integer. Then, in the $(1:b) - \Deg (V_1,V_2)$ game, Maker can ensure
that $d_B(v, V_2) \leq 4b \ln n\, (d_M(v, V_2)+1)$ for every vertex $v\in V_1$.
\end{claim}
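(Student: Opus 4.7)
My plan is to reduce the degree game to the Box Game with resets and invoke Theorem \ref{box1}. In Maker's mind I would set up an auxiliary game $rBox(|V_1|, 2b)$ with one virtual box $A_v$ per vertex $v \in V_1$. Whenever Breaker claims an edge $xy$ in the real game, I would interpret this as BoxMaker adding one unit of weight to $A_x$ provided $x \in V_1$ and $y \in V_2$, and symmetrically to $A_y$. Since a single edge can contribute to at most two boxes (precisely when both endpoints lie in $V_1 \cap V_2$), Breaker's $b$ real moves per round translate into at most $2b$ virtual BoxMaker claims, matching the parameter in $rBox(|V_1|, 2b)$ (with padding by dummy virtual claims if needed). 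This doubling from overlapping boxes is exactly what turns the factor $2$ from Theorem \ref{box1} into the constant $4$ in the target bound.

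Maker's strategy would then be to follow the BoxBreaker strategy provided by Theorem \ref{box1}. Whenever that strategy prescribes ``reset $A_v$'', Maker would claim an arbitrary free edge from $v$ to $V_2$ in the real game, which genuinely clears $A_v$. If no such free edge remains, Maker would claim any free edge and henceforth ignore $A_v$ in the simulation; in this ``frozen'' case every edge from $v$ to $V_2$ is already claimed, so no future Breaker move can contribute to $A_v$ and the simulation stays consistent with the real game.

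For the analysis, the total number of rounds is at most $|E(G)|/(b+1) \leq n^2$, so Theorem \ref{box1} with $m = |V_1| \leq n$ and $k \leq n^2$ would bound the weight of every box by $2b(1 + \ln(|V_1| + k)) \leq 4b \ln n$ at every moment, for $n$ sufficiently large. Fixing $v \in V_1$, each successful reset of $A_v$ corresponds to a distinct Maker edge from $v$ to $V_2$, so the number of resets of $A_v$ is at most $d_M(v, V_2)$. The quantity $d_B(v, V_2)$ equals the total Breaker weight ever added to $A_v$, which decomposes as the sum of weights lost at each reset plus the current weight — each bounded by $4b\ln n$ — yielding
$$
d_B(v, V_2) \leq 4b \ln n \cdot (d_M(v, V_2) + 1).
$$

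The main obstacle I anticipate is the bookkeeping around overlapping boxes and frozen boxes: neither is deep, but both must be handled carefully to ensure that the Box Game simulation faithfully upper bounds the real Breaker degrees. Once these are in place, the claim follows immediately from Theorem \ref{box1}.
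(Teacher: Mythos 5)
Your proposal is correct and follows essentially the same route as the paper's own proof: one box per vertex of $V_1$, the overlap of boxes absorbed by letting BoxMaker claim $2b$ elements per turn, Theorem \ref{box1} giving the weight bound, and the per-reset accounting yielding $d_B(v,V_2)\leq 4b\ln n\,(d_M(v,V_2)+1)$. Your treatment is in fact slightly more careful than the paper's (explicit handling of boxes with no free edge left and of the round count $k$), so nothing further is needed.
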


\begin{proof}
Maker pretends he is BoxBreaker and that he is playing the
$rBox(n,2b)$ game with the boxes $E(\{v\}, N(v, V_2))$, $v \in V_1$.
Notice that these boxes are not necessarily disjoint,
since we did not require $V_1$ and $V_2$ to be disjoint.
However, any edge belongs to
at most two of these boxes.
So BoxBreaker can pretend that the boxes are
disjoint and that BoxMaker claims $2b$ elements in every move
(using the Trick of fake moves). Now,
according to Theorem \ref{box1}, BoxBreaker can ensure that at any
point during the first $k$ rounds of the game, every box has weight
at most $2b(1 + \ln(n+k))$. Hence, at the end of the game every box
has weight at most $4b\ln n$. So, for every vertex $v \in V_1$, Maker
(BoxBreaker) has claimed at least one incident edge of $v$ for every
$4b\ln n$ incident edges Breaker (BoxMaker) has claimed.
Hence $d_B(v,V_2)
\leq 4b \ln n \, (d_M(v,V_2) +1)$.
\end{proof}

\subsection{General graph theory results}
\label{subsec::graphtheory} We will use the following graph which
was introduced in \cite{FH}. Let $k \geq 2$ and $n \geq 3(k-1)$ be
positive integers such that $(k-1) \mid n$. Let $m :=
\frac{n}{k-1}$. Let $C_1, \ldots, C_{k-1}$ be $k-1$ pairwise vertex
disjoint cycles, each of length $m$. For every $1 \leq i < j \leq
k-1$ let $P_{ij}$ be a perfect matching in the bipartite graph
$(V(C_i) \cup V(C_j), \{ uv : u \in V(C_i), \; v \in V(C_j)\})$.
Let ${\mathcal G}_k$ be the family of all graphs $G_k = (V_k, E_k)$
where $V_k = \bigcup_{i=1}^{k-1} V(C_i)$ and $E_k =
\left(\bigcup_{i=1}^{k-1} E(C_i)\right) \cup \left(\bigcup_{1 \leq i
< j \leq k-1} P_{ij}\right)$.

For the convenience of the reader we prove the following lemma.
\begin{lemma} \label{lem::kconnected}
For all integers $k \geq 2$ and $n \geq 3(k-1)$ such that $(k-1)
\mid n$, every $G_k \in {\mathcal G}_k$ is $k$-regular and
$k$-vertex-connected.
\end{lemma}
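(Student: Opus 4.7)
The plan is to handle $k$-regularity by a direct count, then to reduce $k$-vertex-connectivity to showing that $G_k - S$ is connected for every $S \subseteq V_k$ with $|S| \leq k-1$, and finally to settle this by a short case analysis on how $S$ distributes among the cycles. Regularity is immediate: a vertex $v \in V(C_i)$ has exactly two neighbors on $C_i$ and exactly one neighbor in each $V(C_j)$ with $j \neq i$ (coming from $P_{ij}$), giving $2 + (k-2) = k$ in total. For connectivity, one first notes $|V_k| = n \geq 3(k-1) \geq k+1$ for $k \geq 2$, so the standard definition of $k$-connectivity applies. I would set $s_i := |S \cap V(C_i)|$, so $\sum_{i=1}^{k-1} s_i \leq k-1$, and split according to whether some cycle is entirely spared by $S$.

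Suppose first that $s_i = 0$ for some $i$; call $C_i$ the \emph{backbone}. Then $C_i$ is preserved intact in $G_k - S$ and, as a cycle, is connected. For any surviving vertex $v \in V(C_j) \setminus S$ with $j \neq i$, its $P_{ij}$-partner $v' \in V(C_i)$ automatically survives, so the edge $vv'$ lies in $G_k - S$. Hence every vertex of $V_k \setminus S$ connects to the backbone, and $G_k - S$ is connected.

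Otherwise $s_i \geq 1$ for every $i$, which together with $\sum_i s_i \leq k-1$ (and there being $k-1$ cycles) forces $|S| = k-1$ and $s_i = 1$ for all $i$. Each $C_i - S$ is then a path on $m-1$ vertices, hence connected. For any pair $i \neq j$, the matching $P_{ij}$ loses at most $s_i + s_j = 2$ of its $m$ edges, and the hypothesis $n \geq 3(k-1)$ yields $m \geq 3$, so $m - 2 \geq 1$ matching edge survives and glues $C_i - S$ to $C_j - S$. All the surviving cycle paths therefore lie in a single component of $G_k - S$.

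The main (and essentially only) delicate point is the second case: losing exactly one vertex per cycle is the worst distribution of $S$, and it is the bound $m \geq 3$ furnished by $n \geq 3(k-1)$ that rescues the argument by guaranteeing at least one surviving matching edge between every pair of damaged cycles. Everything else is direct bookkeeping on the cycle-plus-matching structure.
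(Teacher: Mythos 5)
Your proof is correct and follows essentially the same two-case strategy as the paper: a spared cycle serves as a dominating backbone, and otherwise each cycle loses exactly one vertex while the matchings $P_{ij}$ retain at least $m-2\geq 1$ edges. If anything, your write-up is slightly cleaner, as it correctly tracks the surviving path length as $m-1$ (the paper's proof says ``path on $k-1$ vertices,'' a typo) and makes explicit where the hypothesis $n\geq 3(k-1)$ is used.
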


\begin{proof}
For $k=2$, the lemma is trivial. So assume $k\geq 3$. It is obvious
that $G_k$ is $k$-regular. Let $S \subseteq V_k$ be an arbitrary set
of size at most $k-1$. We will prove that $G_k \setminus S$ is
connected. Assume first that there exists some $1 \leq i \leq k-1$
for which $S \cap V(C_i) = \emptyset$. Then $(G_k \setminus S) \cap
C_i = C_i$ is connected and $V(C_i)$ is a dominating set of $G_k
\setminus S$. It follows that $G_k \setminus S$ is connected. Assume
then that $|S \cap V(C_i)| = 1$ for every $1 \leq i \leq k-1$.
Hence, $(G_k \setminus S) \cap C_i$ is a path on $k-1$ vertices for
every $1 \leq i \leq k-1$. Since $k-1 \geq 2$ and $|S \cap V(C_i)| =
1$ for every $1 \leq i \leq k-1$ hold by the assumption, it follows
that there is at least one edge between $(G_k \setminus S) \cap C_i$
and $(G_k \setminus S) \cap C_j$ for every $1 \leq i < j \leq k-1$.
It follows that $G_k \setminus S$ is connected.
\end{proof}

The following lemma shows that if a directed graph satisfies some
pseudo-random properties then it contains a long directed path. We
will use it in the proof of Theorem \ref{HAMgame}.

\begin{lemma} [Lemma 4.4 \cite{BKS}] \label{l:LongDirectedPath}
Let $m$ be an integer and let $D=(V,E)$ be an oriented graph with
the following property: There exists an edge from $S$ to $T$ between any two disjoint
sets $S,T\subseteq V$ such that $|S|=|T|=m$.
Then, $D$ contains a directed path of length at least
$|V|- 2m+1$.
\end{lemma}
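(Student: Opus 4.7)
The plan is to run a directed depth-first search on $D$ and exploit the hypothesis to bound the maximum stack size. Throughout the search I maintain a partition $V = A \cup P \cup U$, where $A$ is the set of already processed vertices, $P$ is the current DFS stack (read from the bottom to the top as a directed path in $D$), and $U$ is the set of unvisited vertices. Initially $A = P = \emptyset$ and $U = V$. At each step, if $P \neq \emptyset$ and the top vertex $v$ of $P$ has an out-neighbor in $U$, I move one such out-neighbor from $U$ to the top of $P$ (a push); if $P \neq \emptyset$ and $v$ has no out-neighbor in $U$, I move $v$ from $P$ to $A$ (a pop); if $P = \emptyset$ and $U \neq \emptyset$, I move an arbitrary vertex of $U$ to $P$. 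The procedure ends when $P = U = \emptyset$.

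Two observations are crucial. First, at every moment $P$ forms a directed path in $D$ by construction, so in order to prove the lemma it is enough to exhibit a moment at which $|P| \geq |V| - 2m + 2$. Second, at every moment no edge of $D$ is directed from $A$ to $U$: whenever a vertex $v$ is popped from $P$ to $A$, the popping rule forces all out-neighbors of $v$ to lie in $A \cup P$, and since vertices can only travel in the direction $U \to P \to A$, this property is preserved throughout the execution.

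Now suppose towards a contradiction that $|P| \leq |V| - 2m + 1$ at every moment. We may assume $|V| \geq 2m$, as otherwise the claim is vacuous. Since $|U|$ starts at $|V| \geq m$, ends at $0$, and is decremented by exactly one on each push, there must be a push at which $|U|$ drops from $m$ to $m-1$. Just before that push we have $|U| = m$, and $|P| \leq |V| - 2m$ (because $|P|$ increases by one on the push but must still satisfy $|P| \leq |V| - 2m + 1$ afterwards); hence $|A| = |V| - |U| - |P| \geq m$. Choose any $S \subseteq A$ with $|S| = m$ and set $T = U$. Then $S$ and $T$ are disjoint and each of size $m$, so the hypothesis guarantees an edge of $D$ from $S$ to $T$, and therefore from $A$ to $U$, contradicting the invariant. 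It follows that at some stage $|P| \geq |V| - 2m + 2$, and the directed path formed by $P$ at that moment has length at least $|V| - 2m + 1$, as required. The main point to get right is the off-by-one bookkeeping that relates the assumed bound on $|P|$ to the sizes of $A$ and $U$ at the critical push; once that is set up correctly, the result is an immediate consequence of the invariant that no edge runs from $A$ to $U$.
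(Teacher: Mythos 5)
Your proof is correct: the DFS invariant (no edge from the set of processed vertices to the set of unexplored vertices) together with the off-by-one bookkeeping at the push where $|U|$ drops from $m$ to $m-1$ gives exactly the required contradiction, and the degenerate case $|V|<2m$ is handled. Note that the paper itself states this lemma without proof, citing Lemma 4.4 of \cite{BKS}; your depth-first-search argument is essentially the standard proof given there, so there is nothing to add.
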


The next lemma provides a sufficient (Hall-type) condition for a bipartite graph
to contain a perfect matching.

\begin{lemma} \label{BipPerMatch}
Let $G=(U_1\cup U_2,E)$ be a bipartite graph with $|U_1|=|U_2|=n$.
Let $r\leq n/2$ be an integer such that:
\begin{description}
\item [$(B1)$] For $i\in \{1,2\}$ and every $X\subset U_i$ of size $|X|\leq r$,
            $|N(X)|\geq |X|$.
\item [$(B2)$] For every $X\subset U_1$ and $Y\subset U_2$ with $|X|=|Y|=r$, $|E(X,Y)|>0$.
\end{description}
Then $G$ has a perfect matching.
\end{lemma}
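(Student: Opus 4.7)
The plan is to invoke Hall's marriage theorem: since $|U_1| = |U_2| = n$, it suffices to verify the Hall condition $|N(X)| \geq |X|$ for every $X \subseteq U_1$, and then automatically the matching saturating $U_1$ is perfect. I would split the verification according to the size of $X$, using $(B1)$ for small $X$ and a dichotomy based on $|N(X)|$ for large $X$, where $(B1)$ and $(B2)$ together rule out a Hall violator.

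First, for any $X \subseteq U_1$ with $|X| \leq r$, condition $(B1)$ directly gives $|N(X)| \geq |X|$. So assume $|X| > r$, and suppose for contradiction that $|N(X)| < |X|$. I would consider two subcases according to whether $|N(X)| \leq n - r$ or $|N(X)| > n - r$.

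In the first subcase, $|U_2 \setminus N(X)| \geq r$, so I can pick a set $Y \subseteq U_2 \setminus N(X)$ of size exactly $r$, and since $|X| > r$ I can also pick $X' \subseteq X$ of size $r$. By construction $E(X', Y) \subseteq E(X, U_2 \setminus N(X)) = \emptyset$, which directly contradicts $(B2)$. In the second subcase, the set $\bar{N} := U_2 \setminus N(X)$ satisfies $|\bar{N}| < r$, so $(B1)$ applies to $\bar{N} \subseteq U_2$ and yields $|N(\bar{N})| \geq |\bar{N}|$. On the other hand, every neighbor of $\bar{N}$ must lie in $U_1 \setminus X$ (else some vertex of $\bar{N}$ would have a neighbor in $X$, contradicting $\bar{N} \cap N(X) = \emptyset$), whence $|N(\bar{N})| \leq n - |X|$. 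Combining these two inequalities gives $n - |X| \geq n - |N(X)|$, i.e.\ $|N(X)| \geq |X|$, contradicting our assumption.

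Since in both cases we reach a contradiction, Hall's condition holds for all $X \subseteq U_1$, and Hall's theorem supplies a matching saturating $U_1$, which is a perfect matching by the equal-size assumption. I do not expect a real obstacle here; the main thing to be careful about is the bookkeeping in the second case, namely verifying that $\bar{N}$ is small enough to invoke $(B1)$ and that $N(\bar{N})$ really is contained in $U_1 \setminus X$.
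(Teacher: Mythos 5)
Your proof is correct and uses essentially the same ideas as the paper's: Hall's condition, $(B1)$ for small $X$, and $(B2)$ to rule out a violator via the complement of $N(X)$. The only organizational difference is that the paper verifies $|N(X)|\ge|X|$ only for $|X|\le n/2$ (on both sides of the bipartition) and then asserts that this "readily implies Hall's condition," whereas you handle arbitrary $X\subseteq U_1$ directly, with your second subcase ($|N(X)|>n-r$, hence $|X|>n/2$ and $|\bar N|<r$) supplying exactly the symmetric $(B1)$-on-$U_2$ step that the paper leaves implicit.
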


\begin{proof}
In order to prove that $G$ admits a perfect matching we will prove
that $G$ satisfies Hall's condition, that is,  $|N(X)|\geq X$ for
every $X\subseteq U_1$ (see e.g. \cite{West}).
We show that for every $i\in \{1,2\}$ and for every  $X \subseteq U_i$ of
size $|X|\leq n/2$ we have that $|N(X)|\geq |X|$.
This readily implies Hall's condition.
We distinguish two cases:

\textit{Case 1:} If $|X|\leq r$, then by $(i)$ we have that
$|N(X)|\geq |X|$ and we are done.

 \textit{Case 2:} If $r<|X|\leq
n/2$. By $(ii)$ we have that $|N(X)|\geq n-r \geq n/2 \geq |X|$.
\end{proof}

\subsection{Properties of \gnp}
\label{subsec::gnp}

This subsection specifies properties (A1)-(A3) that a graph $G \sim \gnp$
fulfils a.a.s. It turns out that these properties are all we need to
prove our main theorems. So in fact, we could strengthen them to hold for
any graph $G$ that has suitable pseudo-random properties.

\begin{lemma} \label{l:propertiesofgnp}
Let ${K}\geq 2$ and let $G\sim \gnp$ with $p=\ln^K n/n$. Further,
let $\alpha \in \reals$ such that $1 \leq \alpha <{K}$, and let
$f=f(n)$ be some function that satisfies $1\leq f = O((\ln\ln n)^3)$.
Then, a.a.s.
\begin{description}
\item [(A1)] $\delta(G)=\Theta\left(\ln^K n\right)$ and $\Delta(G)=\Theta\left(\ln^K n\right)$.
\item [(A2)] For every subset $U\subseteq V$, $|E(U)|\leq \max\{100\, |U|\ln n,\: 100\,|U|^2p\}$.
\item [(A3)] For any two disjoint subsets $U,W\subseteq V$ with $|U|=|W|= n f^{-1} \ln^{-\alpha}n$,
                $|E(U,W)|=\Omega \left(n f^{-2} \ln^{{K}-2\alpha}n\right)$ and
                $|E(U)|=\Omega \left(n f^{-2} \ln^{{K}-2\alpha}n \right).$
\end{description}
\end{lemma}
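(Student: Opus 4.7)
The plan is to derive each of (A1)--(A3) from a Chernoff bound applied to an appropriate binomial variable, followed by a union bound over the relevant family of subsets. In each case the argument reduces to verifying that the Chernoff exponent dominates the logarithm of the number of sets we union over; all three items follow the same recipe, differing only in the parameters.

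For (A1), note $d(v)\sim \mathrm{Bin}(n-1,p)$ has expectation $(1+o(1))\ln^{K}n$. Lemma \ref{Che} with $a=1/2$ gives
$$\Prob\left[d(v)\notin \left(\tfrac12\ln^{K}n,\tfrac32\ln^{K}n\right)\right]\leq 2\exp\left(-\ln^{K}n/24\right),$$
which is $o(1/n)$ since $K\geq 2$. A union bound over the $n$ vertices completes this part.

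For (A2), fix $u=|U|$ and write $\mu=\binom{u}{2}p\leq u^{2}p/2$. I would split into the two regimes $up\geq \ln n$ and $up<\ln n$, according to which term in the maximum dominates. In the first regime, $100u^{2}p\geq 7\mu$, so Lemma \ref{l:Che} yields $\Prob[|E(U)|\geq 100u^{2}p]\leq e^{-100u^{2}p}\leq e^{-100u\ln n}$; in the second, $100u\ln n\geq 7\mu$ and Lemma \ref{l:Che} gives $\Prob[|E(U)|\geq 100u\ln n]\leq e^{-100u\ln n}$. Multiplying by $\binom{n}{u}\leq e^{u\ln n}$ and summing the resulting geometric series over $u\geq 1$ yields $o(1)$.

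The delicate part is (A3). Fix disjoint sets of size $s=nf^{-1}\ln^{-\alpha}n$. The number of pairs we union over is at most $\binom{n}{s}^{2}$, whose logarithm is of order $s\ln(en/s)=O(s\ln\ln n)$, using $f=O((\ln\ln n)^{3})$ and $\alpha$ bounded. Meanwhile $|E(U,W)|\sim \mathrm{Bin}(s^{2},p)$ has expectation $\mu'=s^{2}p=nf^{-2}\ln^{K-2\alpha}n$, and $\Exp|E(U)|=\Theta(\mu')$. Applying Lemma \ref{Che} (lower tail with $a=1/2$) gives $\Prob[|E(U,W)|<\mu'/2]\leq e^{-\mu'/8}$, and likewise for $|E(U)|$. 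Since $\mu'/s=f^{-1}\ln^{K-\alpha}n$, the exponent $\mu'/8-O(s\ln\ln n)$ tends to $+\infty$ precisely because $K-\alpha>0$ ensures $\ln^{K-\alpha}n$ dominates any polylog of $\ln\ln n$; this is the only place the strict inequality $\alpha<K$ is used. A final union bound over all pairs then establishes (A3).
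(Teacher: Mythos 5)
Your proof follows essentially the same recipe as the paper's: identify the relevant binomial variable, apply Chernoff, and union-bound, with the exponent dominating the log of the number of sets. One small slip in (A1): the upper-tail estimate in Lemma \ref{Che} is only stated for $a\geq 1$, so you cannot deduce $\Prob[d(v)>\tfrac32\ln^{K}n]\leq \exp(-\ln^{K}n/24)$ from it; taking $a=1$ (giving $d(v)>2np$, as the paper does) repairs this without affecting the $\Theta$ conclusion. Your two-regime split in (A2) is correct but unnecessary, since $\max\{100u\ln n,\,100u^{2}p\}\geq 100u^{2}p\geq 7\mu$ and $\max\{100u\ln n,\,100u^{2}p\}\geq 100u\ln n$ already give both the applicability of Lemma \ref{l:Che} and the final $e^{-100u\ln n}$ bound in a single stroke; the (A3) argument, including the identification of $K>\alpha$ as the reason the exponent wins, matches the paper exactly.
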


\begin{proof}
To prove (A1), let $v\in V$. Since $d_G(v)\sim Bin(n-1,p)$ we
conclude $\Exp (d_G(v))=(n-1)p= \ln^K n(1-o(1))$. Hence, by Lemma \ref{Che},
$$\Prob \Big(d_G(v) \leq (1-1/2)\ln^K n\Big) \leq \exp \left(-\frac{\ln^K n}{8} (1-o(1)) \right)=o(1/n).$$
Now, by the union bound argument we conclude that
$$\Prob \Big(\exists \: v \in V: \:  d_G(v)\leq \frac{\ln^K n}{2}\Big) \leq n\cdot o(1/n)=o(1).$$
Similarly, by Lemma \ref{Che}, we obtain
$$\Prob \Big( \exists \: v \in V: \: d_G(v) \geq 2\ln^K n\Big)=o(1).$$
To prove (A2), let $U \subseteq V$ be a fixed subset of size $t := |U|$.
Then
\[ \Exp (|E(U)|) \leq t^2 p < 100 t^2 p \leq \max \{100 t \ln n, 100 t^2p\}. \]
Thus, by Lemma \ref{l:Che},
\begin{align*}
    \Prob \Big( |E(U)| \geq \max \{100 t\ln n, 100 t^2p\} \Big)
    &\leq \exp \Big( - \max \{100 t \ln n, 100 t^2p\} \Big) \\
    &\leq \exp \Big( - 100 t \ln n \Big).
\end{align*}
It follows that
\begin{align*}
    &\Prob \Big( \exists \ U \subseteq V \ : \ |E(U)| > \max \{100|U|\ln n, 100|U|^2p\} \Big) \\
    &\qquad \leq \sum_{t=1}^n \binom{n}{t} \exp ( - 100 t \ln n ) \\
    &\qquad \leq \sum_{t=1}^n  \exp \Big(t \ln n - 100 t \ln n \Big) \\
    &\qquad \leq \sum_{t=1}^n  \exp \Big(- 99 \ln n \Big) \\
    &\qquad =  \exp ( - 98 \ln n ) = o(1). \\
\end{align*}
For (A3), let $U,W\subseteq V$ be two disjoint subsets such that
$|U|=|W|= n f^{-1} \ln^{-\alpha}n$. Note that $|E(U,W)|$ is
binomially distributed with expectation $\mu_n := n
f^{-2}\ln^{{K}-2\alpha}n$. So by Lemma \ref{Che} we have that $\Prob
\left( |E(U,W)| \leq \frac{\mu_n}{2} \right) \leq \exp
\left(-\frac{\mu_n}{8}\right).$ Applying union bound we get that
\begin{align*}
    &\Prob \left(\exists \textrm{ disjoint } U,W \subseteq V \ : \  |U|=|W|=\frac{n}{f \ln^\alpha n} \textrm{ and } |E(U,W)| \leq \frac{\mu_n}{2} \right) \\
    &\qquad \leq \binom{n}{\frac{n}{\ln^\alpha n}}^2 \exp \left( -\frac{\mu_n}{8} \right) \\
    &\qquad \leq \exp \Bigg( \frac{2n}{\ln^\alpha n}(\alpha \ln \ln n +1)
        - \frac{n\ln^{{K}-2\alpha}n}{8f^2}\Bigg) \\
    &\qquad = o(1),
\end{align*}
since $K > \alpha$. Finally, since $|E(U)|\sim
Bin\left(\binom{|U|}{2},p\right)$, it follows analogously that
a.a.s. $|E(U)|=\Omega \left(n f^{-2} \ln^{{K}-2\alpha}n\right)$ for
all $U$ with $|U|= n f^{-1} \ln^{-\alpha}n$.
\end{proof}

\subsection{Expanders}
\label{subsec::expanders}

\begin{definition}
Let $G=(V,E)$ be a graph with $|V|=n$. Let $R:=R(n)$ and $c:=c(n)$
be two positive integers. We say that the graph $G$ is an
\emph{$(R,c)$-expander} if it satisfies the following two
properties:

\begin{description}
\item [$(E1)$] For every subset $X\subseteq V$
        with $|X|\leq R$, $|N(X)\setminus X|\geq c|X|$.
\item [$(E2)$] $|E(X,Y)|>0$ for every two disjoint subsets $X,Y\subseteq V$ of size
$|X|=|Y|=R$.
\end{description}
\end{definition}

Recall that a graph $G=(V,E)$ is called \emph{Hamilton-connected} if for every $x,y\in V$, the graph $G$ contains a Hamilton path with $x$ and $y$ as its endpoints.
The following sufficient condition for a graph to be Hamilton-connected was introduced in \cite{HKS}.

\begin{theorem} \label{HamCon}
Let $n$ be sufficiently large, and let $G=(V,E)$ be an $\Big(n/\ln
n,\, \ln \ln n\Big)$-expander on $n$ vertices. Then $G$ is
Hamilton-connected.
\end{theorem}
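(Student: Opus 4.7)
The plan is to employ Pósa's classical rotation-extension technique, combined with the expansion properties $(E1)$ and $(E2)$. Fix arbitrary $x, y \in V(G)$; I aim to exhibit a Hamilton path from $x$ to $y$.

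As a first step I would show that $G$ admits a Hamilton path starting at $x$. Take a longest path $P$ with endpoint $x$; if $P$ is not Hamilton, perform Pósa rotations at the non-$x$ end. Pósa's lemma guarantees that the set $T$ of reachable endpoints satisfies $|N_G(T) \setminus T| \leq 2|T|$, so $(E1)$ forces $|T| > R = n/\ln n$ (otherwise expansion would give $|N(T)\setminus T| \geq (\ln\ln n)|T| > 2|T|$, a contradiction). A symmetric rotation at the $x$-end, applied to each path obtained above, produces a second endpoint set of size $> R$. Property $(E2)$ then supplies a ``booster'' edge between two such large sets; such an edge either lengthens the current path or enables a rotation that absorbs an uncovered vertex. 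Iterating finitely often yields a Hamilton path from $x$ to some vertex $u$.

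The next step is to modify this path to end at $y$. Define $\mathcal{H}_x := \{w \in V : G \text{ has a Hamilton path from } x \text{ to } w\}$. Pósa rotations applied to any Hamilton path from $x$ (while fixing $x$) keep the endpoint inside $\mathcal{H}_x$, so the argument above shows $|\mathcal{H}_x| > R$. Next, for each $w \in \mathcal{H}_x$, rotate the corresponding Hamilton path at the $x$-end while fixing $w$, producing a set $S_w$ of vertices $z$ such that there is a Hamilton path from $z$ to $w$, again with $|S_w| > R$. To obtain a Hamilton path from $x$ to $y$, one applies $(E2)$ to find an edge between the expansion-grown endpoint set at $x$'s side and the analogous set associated with $y$; converting this bridge edge into a valid rotation yields a Hamilton path whose endpoints are the prescribed vertices $x$ and $y$.

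The main obstacle is that Pósa rotations naturally change only one endpoint, whereas here both $x$ and $y$ must be pinned. This forces an interleaved two-sided rotation scheme in which one side is frozen while the other is expanded, and then roles are reversed; the final $(x,y)$-Hamilton path is obtained by invoking $(E2)$ to connect the two independently grown endpoint sets via a bridge edge. Some care is needed to avoid edges already lying on the current path, but since a Hamilton path has only $n-1$ edges, this exclusion is easily absorbed by the generous expansion margin $c = \ln \ln n$ provided by $(E1)$.
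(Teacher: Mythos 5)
You are attempting to reprove a result that the paper itself does not prove: Theorem \ref{HamCon} is imported from \cite{HKS}, where it is the main theorem of a substantial paper, and its proof there is a considerably more delicate iterated-rotation argument than the standard P\'osa scheme you outline. That should already be a warning sign that the routine rotation--extension sketch cannot be the whole story, and indeed your proposal has a genuine gap exactly at the point where $(E2)$ is invoked. After rotating the non-$x$ end you obtain a set $T$ of far endpoints with $|T|>R$, and for each $t\in T$ a set $S_t$ of near endpoints with $|S_t|>R$; but an edge supplied by $(E2)$ between ``two such large sets'' is useless unless its two endpoints are the two ends of the \emph{same} transformed path. The useful (booster) pairs form the path-dependent family $\bigcup_{t\in T}\{(s,t):s\in S_t\}$, which is in general not a product $A\times B$ of two large sets, whereas $(E2)$ only guarantees an edge between two \emph{fixed} disjoint sets of size $R$. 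It is perfectly consistent with $(E1)$ and $(E2)$ that no $t\in T$ has a neighbour in its own $S_t$ (each such constraint concerns a single vertex versus a set), so the claimed booster edge ``either lengthens the current path or enables a rotation'' is not justified; extracting a usable closing edge from $(E2)$ is precisely the hard part that \cite{HKS} is about.

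The same compatibility problem recurs, in aggravated form, in your second step. An edge $ab$ with $a\in\mathcal{H}_x$ and $b\in\mathcal{H}_y$ connects endpoints of two \emph{different} Hamilton paths (one from $x$ to $a$, another from $y$ to $b$, each already spanning all of $V$), and there is no rotation that ``converts this bridge edge'' into a Hamilton $x$--$y$ path; gluing two spanning paths along one edge produces nothing. Note also that $(E1)$--$(E2)$ allow vertices of degree as small as $\ln\ln n$, so one cannot even argue that a prescribed vertex $y$ has a neighbour inside an endpoint set of size $R=n/\ln n$; pinning both endpoints therefore needs a genuinely different mechanism, not just ``interleaved two-sided rotations'' plus $(E2)$. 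As written, the proposal reproduces the easy part of the argument (P\'osa's lemma plus $(E1)$ forces endpoint sets of size greater than $R$, which is correct) and hand-waves over the two steps where the actual difficulty lies.
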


That is, by ensuring expander properties (locally), we can enforce a Hamilton cycle in the
whole graph (global property).

The following theorem lies in the heart of all of our proofs. It
says that in a subgraph of $G$ of sublinear order where certain
properties hold Maker is able to build a suitable expander fast,
that is in $o(n)$ moves.

\begin{theorem} \label{th::FastExpGame}
Let $b$ be an integer, ${K}> 12$, let $n$ be a sufficiently large
integer and let $p=\ln^K n/n$. Let $H=(V_H,E_H)$ be a graph on
$|V_H|=\Theta\left(n/\ln^{4}n\right)$ vertices and let $M$ and $F$
be two edge disjoint subgraphs of $H$, where $E_M$ already belongs
to Maker and $F$ consists of free edges. Assume that the following
properties hold:
\begin{enumerate}[(1)]
\item \label{th::FastExpGame1} $E_M \cup E_F=E_H$.
\item \label{th::FastExpGame2} There exist a partition $V_H = A_1 \cup A_2$ and
    a constant $c_1>0$ such that
    \begin{align*}
        A_1 &=\{v\in V_H:d_M(v)\geq c_1\ln^{{K}-6}n\}, \\
        A_2 &=\{v\in V_H:d_F(v)\geq c_1\ln^{{K}-4}n\}, \text{ and} \\
        |A_1|&= O(n\cdot \ln^{6-{K}}n).
    \end{align*}
\item \label{th::FastExpGame3} For any two disjoint subsets $U,W\subseteq V_H$ of size
        $|U|=|W|=\frac{n}{\ln^5n\, (\ln\ln n)^3}$, \\
        $|E_H(U,W)|=\Omega\left(\frac{n\ln^{{K}-10}n}{(\ln\ln n)^6}\right)$.
\item \label{th::FastExpGame4}For every subset $U\subseteq V_H$, $|E_H(U)|\leq \max\{100 \, |U|\ln n,\: 100\,|U|^2p\}$.
\end{enumerate}
Then, for every $c\leq \ln \ln (|V_H|)$ and $R=|V_H|/\ln (|V_H|)$,
in the $(1:b)$ Maker-Breaker game played on $E_H$, Maker has a
strategy to build an $(R,c)$-expander within $o(n)$ moves.
\end{theorem}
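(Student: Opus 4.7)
The plan is to combine two ingredients: Beck's criterion (Theorem \ref{bwin}) for producing a Maker strategy, and a fake-moves argument (in the spirit of Lemma \ref{lem::fakeMoves}) for speeding up its execution. I would fix a ``virtual'' bias $b^{\ast} = \Theta(\ln^{{K}-7}n / \ln\ln n)$, much larger than $b$, and first exhibit a Maker strategy in the $(1:b^{\ast})$ transversal game whose winning condition is: claim at least one edge of each set in the family $\mathcal{F}$ consisting of (for E1) $F_{X,T} := E_H(X, V_H\setminus T)$ for every $X\subseteq V_H$ with $|X|=k\leq R$ and every $T\supseteq X$ with $|T|\leq (c+1)k$, together with (for E2) $F_{X,Y} := E_H(X,Y)$ for every pair of disjoint $X,Y\subseteq V_H$ of size $R$. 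Hitting every such $F$ is equivalent to $M$ being an $(R,c)$-expander. A fake-moves argument then translates this into a $(1:b)$ strategy using at most $1+|E_H|/(b^{\ast}+1)$ moves; since property (\ref{th::FastExpGame4}) yields $|E_H| \leq 100|V_H|^2 p = O(n\ln^{{K}-8}n)$, the move count is $O(n\ln\ln n/\ln n) = o(n)$, as required.

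To establish the $(1:b^{\ast})$ winning strategy, I would view Maker as the ``new Breaker'' in a role-swapped game and apply Theorem \ref{bwin}: it suffices to verify $\sum_{F\in\mathcal{F}} 2^{-|F|/b^{\ast}} < 1/2$. The lower bound on $|F|$ is split according to the size $k$ of $X$. For small $k$, the minimum-degree estimate from properties (\ref{th::FastExpGame1})--(\ref{th::FastExpGame2}) combined with the identity $|F_{X,T}| \geq \sum_{x\in X} d_H(x) - 2|E_H(X\cup T)|$ and the bound (\ref{th::FastExpGame4}) gives $|F_{X,T}| = \Omega(k\ln^{{K}-6}n)$. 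For large $k$ (and for the (E2) winning sets), property (\ref{th::FastExpGame3}) applied to a sub-pair of the prescribed size $n/(\ln^5 n (\ln\ln n)^3)$ inside $X$ and $V_H\setminus T$ yields $|F| = \Omega(n\ln^{{K}-10}n/(\ln\ln n)^6)$. Plugging these lower bounds into Beck's sum, together with $\binom{|V_H|}{k}\binom{|V_H|}{ck} \leq \exp((c+1)k\log(e|V_H|/k))$ for (E1) and $\binom{|V_H|}{R}^2 = 2^{O(|V_H|)}$ for (E2), verifies the Beck inequality for the chosen $b^{\ast}$. Moreover, property (\ref{th::FastExpGame2}) ensures that every $(X,T)$ with $X\cap A_1\neq\emptyset$ and $|T|<c_1\ln^{{K}-6}n$ already contains an initial Maker edge, so such pairs are ``auto-hit'' and can be dropped from $\mathcal{F}$.

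The main obstacle is the intermediate range of $k$---roughly $k \in [\ln^{{K}-6}n/\ln\ln n,\; n/(\ln^5 n (\ln\ln n)^3)]$---where neither the minimum-degree estimate nor the stated instance of property (\ref{th::FastExpGame3}) applies cleanly to $X$ itself. Resolving this will require a careful blend: using (\ref{th::FastExpGame4}) to control $|E_H(X\cup T)|$, observing that any $X$ with a substantial $A_1$-part leads to auto-hit winning sets through the Maker edges from (\ref{th::FastExpGame2}), and reducing the remaining cases to a sub-pair application of (\ref{th::FastExpGame3}) by exploiting $|A_1| = O(n\ln^{6-{K}}n) \ll R$. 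The bookkeeping across these regimes is the most delicate part of the argument; once it is in place, the Beck estimate and the fake-moves conversion complete the proof.
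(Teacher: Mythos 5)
Your route is genuinely different from the paper's. The paper first \emph{randomly sparsifies} $H$ (Lemma \ref{l:sparsergraph}) to produce a subgraph $H_1$ with only $o(n)$ edges whose size parameters no longer depend on $K$; Maker then plays out the entire sparse board, alternating a degree game (towards (E1)) with an $\mathcal{F}$-Breaker game verified via Theorem \ref{bwin} (towards (E2)), and the $o(n)$ count is immediate from $|E_1|=o(n)$. You stay on the full board, encode (E1) and (E2) directly as one transversal family, verify a Beck-type sum at an inflated virtual bias $b^*=\Theta(\ln^{K-7}n/\ln\ln n)$, and recover the $o(n)$ move count by a fake-moves reduction. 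Your way avoids re-deriving the pseudorandom properties inside a random subgraph; the paper's way makes the later estimates $K$-free, which is exactly the mechanism by which it obtains $o(n)$.

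Two places need tightening before this is a proof. First, Lemma \ref{lem::fakeMoves} speeds up a \emph{Maker} win (fully claiming a winning set), whereas here Maker's goal is Breaker-type (touch every $F$). The count $1+|E_F|/(b^*+1)$ still holds, but by a slightly different argument: after that many rounds of the virtual $(1:b^*)$ game the free board is exhausted and, by the Beck bound, the virtual Breaker has not monopolized any $F$, so Maker has touched each of them; this must be argued, not cited. Second, the ``intermediate $k$'' range you flag does not close by looking at $|X|$ alone. The missing idea --- the one the paper uses in its (E1)-verification --- is to split by the $A_1/A_2$ membership of $X$. If $|X\cap A_1|\ge k/2$ then $k\le 2|A_1|=O(n\ln^{6-K}n)$ and $\sum_{v\in X\cap A_1}d_M(v)\ge (k/2)c_1\ln^{K-6}n$ strictly exceeds $2|E_M(X\cup T)|\le 200\max\{(c+1)k\ln n,\,((c+1)k)^2p\}$ (here $K>12$ is exactly what makes $\ln^6 n(\ln\ln n)^2=o(\ln^{K-6}n)$), so some $A_1$-vertex already has a Maker-neighbour outside $T$ and $F_{X,T}$ is auto-hit for \emph{every} such $(X,T)$, not only small ones. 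If $|X\cap A_2|>k/2$, you must use the free degree $d_F(v)\ge c_1\ln^{K-4}n$ on $A_2$ rather than the $d_H$-type bound $\ln^{K-6}n$ you invoke: those two extra $\ln n$-factors are exactly what give $((c+1)k)^2p=o(k\ln^{K-4}n)$ for all $k\le R=\Theta(n/\ln^5 n)$, yielding $|F_{X,T}|=\Omega(k\ln^{K-4}n)$ throughout the range. Without this split the Beck sum does not converge for $k$ roughly in $(n/(\ln^6 n(\ln\ln n)^2),R]$ and the argument is incomplete; with it, your choice of $b^*$ works and the approach goes through.
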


Before we prove this theorem we need an auxiliary result. Consider a
graph $H$ with (edge-disjoint) subgraphs $M$ and $F$ such that the
assumptions of Theorem \ref{th::FastExpGame} hold. Given a subgraph
$H_1=(V_H,E_1)$ of $H$, we denote $M_1$ and $F_1$ to be the
restrictions of $M$ and $F$ respectively to the subgraph $H_1$. The
following lemma says that in $H$ we can find a sparse subgraph with
suitable properties that will guarantee Maker's win in the expander
game.

\begin{lemma}\label{l:sparsergraph}
Under the assumptions of Theorem \ref{th::FastExpGame}
there exists a subgraph $H_1=(V_H,E_1)$ of $H$ with the
following properties:
\begin{enumerate}[(i)]
\item For every $v\in A_2$, $d_{F_1}(v)=\Omega(\ln^{3}n)$.
\item \label{l:sparsergraphii} For any two disjoint subsets $U,W\subseteq V_H$ such that $|U|=|W|=\frac{n}{\ln^5n\, (\ln\ln n)^3}$, \\
    $|E_1(U,W)|=\Omega\left(\frac{ n}{\ln^{3}n \, (\ln\ln n)^6} \right)$ .
\item For every $U\subseteq V_H$, $|E_1(U)|\leq \max\{1000|U|\ln n,1000|U|^2\ln^7n/n\}$.
\item \label{l:sparsergraphiv} $|E_1|=o(n)$.
\end{enumerate}
\end{lemma}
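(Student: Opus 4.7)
My plan is to take $H_1$ to be a random subgraph of $H$: include each edge of $E_H$ in $E_1$ independently with probability $q := c_0 \ln^{7-K} n$, where $c_0>0$ is a small absolute constant (one can take $c_0 = 1/70$). The task is then to show that a.a.s.\ the resulting $H_1$ satisfies (i)--(iv), so in particular such an $H_1$ exists.

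For (i), for each $v \in A_2$ the variable $d_{F_1}(v) \sim Bin(d_F(v), q)$ has mean at least $c_0 c_1 \ln^3 n$, so Chernoff's lower-tail bound (Lemma \ref{Che}) gives per-vertex failure probability $\exp(-\Omega(\ln^3 n))$, which is absorbed by the union bound over $V_H$. For (ii), for each pair $U, W$ of disjoint subsets of size $n/(\ln^5 n \, (\ln\ln n)^3)$, $|E_1(U,W)| \sim Bin(|E_H(U,W)|, q)$ has mean $\Omega(n/(\ln^3 n \, (\ln\ln n)^6))$ by assumption (\ref{th::FastExpGame3}); the per-pair failure probability $\exp(-\Omega(n/(\ln^3 n \, (\ln\ln n)^6)))$ easily beats the union-bound factor $\binom{|V_H|}{|U|}^2 \leq \exp(O(n/(\ln^4 n\,(\ln\ln n)^3)))$, since $\ln n \gg (\ln\ln n)^3$. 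For (iv), applying assumption (\ref{th::FastExpGame4}) with $U = V_H$ gives $|E_H| = O(n \ln^{K-8} n)$, so $\Exp[|E_1|] = q|E_H| = O(n/\ln n) = o(n)$, and concentration follows from Chernoff.

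The main obstacle is property (iii), which must hold simultaneously for every $U \subseteq V_H$. The plan is to split on $|U|$ according to which branch of assumption (\ref{th::FastExpGame4}) is active. If $|U| \leq \ln n / p = n / \ln^{K-1} n$, then $|E_H(U)| \leq 100 |U| \ln n$, whence $|E_1(U)| \leq |E_H(U)| \leq 1000|U|\ln n$ holds deterministically. If $|U| > n/\ln^{K-1} n$, then $|E_H(U)| \leq 100|U|^2 p$, so $\mu := \Exp[|E_1(U)|] \leq 100 c_0 |U|^2 \ln^7 n / n$. Setting $k := \max\{1000|U|\ln n, 1000|U|^2 \ln^7 n/n\}$, one checks that with $c_0 = 1/70$ we have $k \geq 7\mu$ in both sub-regimes $|U| \leq n/\ln^6 n$ (where $|U|^2\ln^7 n/n \leq |U|\ln n$) and $|U| > n/\ln^6 n$. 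Then Lemma \ref{l:Che} gives $\Pr[|E_1(U)| \geq k] \leq e^{-k}$, and since $k \geq 1000|U|\ln n$ always, each summand is at most $n^{-1000|U|}$. The union bound $\sum_t \binom{|V_H|}{t} n^{-1000 t} \leq \sum_t n^{-999 t} = o(1)$ closes the argument.

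The delicate point is that two distinct thresholds appear in (iii) --- the threshold $n/\ln^{K-1} n$ at which assumption (\ref{th::FastExpGame4}) switches regimes and the threshold $n/\ln^6 n$ at which the $\max$ in (iii) flips --- and these thresholds do not coincide for $K>12$. The whole argument hinges on calibrating $c_0$ so that in the intermediate range $n/\ln^{K-1}n < |U| \leq n/\ln^6 n$, where the random thinning must knock the expected number of $U$-internal edges down from $\sim |U|^2 p$ all the way to $O(|U|\ln n)$, Lemma \ref{l:Che} is still applicable with an exponent large enough to overpower the $\binom{|V_H|}{t}$ counting factor. Once this numerical matching is verified, the rest of the proof is a routine Chernoff-plus-union-bound calculation.
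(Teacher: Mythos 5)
Your proof is correct and takes essentially the same approach as the paper: random edge sparsification with probability $\Theta(\ln^{7-K}n)$, followed by Chernoff and union-bound arguments for each property. The only cosmetic difference is in (iii), where the paper avoids your case split by observing directly that $\Exp|E_1(U)| \leq \max\{100|U|\ln n \cdot \rho,\, 100|U|^2\ln^7 n/n\} \leq \max\{100|U|\ln n,\, 100|U|^2\ln^7 n/n\}$ (since $\rho \leq 1$ for $K>7$), so that $k = 1000\max\{|U|\ln n, |U|^2\ln^7 n/n\} \geq 10\,\Exp|E_1(U)|$ uniformly and Lemma~\ref{l:Che} applies without the regime analysis or the auxiliary constant $c_0$.
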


Note that all size parameters in this lemma do not depend on $K$
anymore. We want to stress that this is crucial for obtaining $|E_1|
=o(n)$.

\begin{proof}
Let $\rho= \ln ^{7-{K}}(n)$. Pick every edge of $H$ to be an edge of
$H_1$ with probability $\rho$ independently of all other choices.
Let $s(n):=\frac{n}{\ln^5n\, (\ln\ln n)^3}$. The properties
$(i)$-$(iv)$ will all be proven by identifying the correct binomial
distribution and by applying Chernoff- and union-bound-type
arguments.

To prove property $(i)$, notice that for every $v\in A_2$ the degree
of $v$ in $F_1$ is binomially distributed, that is, $d_{F_1}(v)\sim
Bin(d_F(v),\rho)$ with mean $\Exp (d_{F_1}) \geq c_1 \ln^3 n$.

Therefore, by Lemma \ref{Che} we have $\Prob\Big(d_{F_1}(v)\leq
\frac{1}{2}\, c_1 \ln^3 n \Big) \leq \exp \Big(-\frac{1}{8}\,
c_1\ln^{3}n\Big)$.
Hence, by the union bound we conclude $\Prob\Big(\exists\: v\in
A_2:\, d_{F_1}(v)\leq \frac{1}{2}\,c_1\ln^{3}n\Big)=o(1)$.

For property $(ii)$, let $c_2>0$ be such that $|E_H(U,W)|\geq
c_2n\ln^{{K}-10}n/(\ln\ln n)^6$ for every two disjoint subsets
$U,W\subseteq V_H$ with $|U|=|W|=s(n)$. This $c_2$ clearly exists by
assumption \eqref{th::FastExpGame3}.
Let $U,W$ be such subsets. Since $|E_1(U,W)|\sim Bin(E_H(U,W),\rho)$
with mean $\Exp (|E_1(U,W)|) \geq \frac{c_2 n}{\ln^{3}n\, (\ln\ln
n)^6}$, by Lemma \ref{Che},
\begin{align*}  \Prob \Bigg(|E_1(U,W)| \leq \frac{c_2 n}{2 \ln^{3}n\,(\ln\ln n)^6}\Bigg)
      &\leq \exp \left(-\frac{c_2n}{8\ln^{3}n\, (\ln\ln n)^6}\right).
\end{align*}
Therefore, by union bound we conclude that
\begin{align*}
    & \Prob\Bigg(\exists \: U,W\subseteq V_H : \: |U|=|W|=s(n) \textrm{ and }|E_1(U,W)| \leq \frac{c_2n}{2 \ln^{3}n\,(\ln\ln n)^6}\Bigg)\\
        & \qquad \leq \binom{n}{\frac{n}{\ln^5n}}^2
                \exp \Bigg(-\frac{c_2n}{8 \ln^{3}n\, (\ln\ln n)^6}\Bigg)\\
        & \qquad \leq \exp \Bigg(\frac{2n}{\ln^4 n} -\frac{c_2n}{8 \ln^{3}n\, (\ln\ln n)^6}\Bigg)
        =o(1).
\end{align*}

To prove property $(iii)$, note that  $|E_1(U)|\sim
Bin(E_H(U),\rho)$ with expectation $\Exp (|E_1(U)|) \leq \max
\{100\, |U|\ln n,\: 100\,|U|^2\ln^7n/n \}$. Again, by Lemma
\ref{l:Che} and union bound we get that:
\begin{align*}
    & \Prob \Big(\exists\: U\subseteq V_H : \: |E_1(U)|\geq
        \max\Big\{1000\, |U|\ln n,\: 1000\,|U|^2\ln^7n/n\Big\}\Big)\\
    & \qquad \leq \sum_{t=1}^{|V_H|} \binom{|V_H|}{t}\exp\Big(-\max\Big\{1000t\ln n,\:1000t^2\ln^7n/n\Big\}\Big)\\
    &\qquad \leq \sum_{t=1}^{n} \exp\Big(t\ln n - 1000t\ln n\Big)\\
    & \qquad \leq n\exp(-999\ln n)=o(1).
\end{align*}

For property $(iv)$, notice that $|E_1|\sim Bin(|E_H|,\rho)$. By
condition \eqref{th::FastExpGame4}, and since $|V_H| = \Theta
(n/\ln^4n)$, $|E_H|=O(n\ln^{{K}-8}n)$.
 So the expected size of $E_1$ is
$\mu=O(n\ln^{{K}-8}n\rho)=o(n)$.
Hence, again, by Lemma \ref{Che} we conclude that $|E_1|=o(n)$ with
probability tending to 1.

We have shown that in the randomly chosen subgraph the properties
$(i) - (iv)$ hold a.a.s. In particular, there exists an instance where all hold.
\end{proof}

Now, we are ready to prove Theorem \ref{th::FastExpGame}.

\begin{proof} [of Theorem \ref{th::FastExpGame}]

Let $H_1=(V_H,E_1)$ be a subgraph of $H$ as given by Lemma
\ref{l:sparsergraph}. For achieving his goal, Maker will play two
games in parallel on $E_1$. In the odd moves Maker plays the $(1:2b)$
degree game on $F_1$ and in the even moves he plays as $\cf$-Breaker the
$(2b:1)$ game $(E_1,\cf)$, where the winning sets are
\[ \cf=\Bigg\{ E_1(U,W) : U,W\subseteq V_H,\, U\cap W=\emptyset \textrm{ and }
    |U|=|W|=\frac{n}{\ln^5n\, (\ln \ln n)^3}\Bigg\}.\]
Combining Claim \ref{TheDegreeGame} and Lemma \ref{l:sparsergraph},
Maker can ensure with his odd moves that
\begin{equation} \label{degA2}
\text{for every } v\in A_2 \ : d_{M\cap H_1}(v)=\Omega (\ln^2n).
\end{equation}
Also, by Lemma \ref{l:sparsergraph}~\eqref{l:sparsergraphii},
\begin{align*}
    \sum_{F\in \cf}2^{-|F|/2b}
        & \leq\binom{n}{\frac{n}{\ln^5n}}^2
        2^{-\Omega\Big(n/(\ln^{3}n\, (\ln \ln n)^6)\Big)}\\
        & \leq \exp\Bigg(\frac{2n}{\ln^4n}
            - \Omega\left(\frac{n}{\ln^{3}n\, (\ln \ln n)^6}\right)\Bigg) =o(1).
\end{align*}

So by Theorem \ref{bwin} Maker (as $\cf$-Breaker) wins the game
$(E_1,\cf)$. That is, for any two disjoint subsets $U,W \subseteq
V_H$ of size $|U|=|W|=\frac{|V_H|}{\ln
(|V_H|)}=\Theta\left(\frac{n}{ \ln^5n}\right)=\omega(\frac{n}{\ln^5n\,
(\ln \ln n)^3})$, Maker can claim an edge between $U$ and $W$. Note
that this gives condition $(E2)$ of the expander definition, with $R
= |V_H|/ \ln (|V_H|)$. Furthermore, by Lemma
\ref{l:sparsergraph}~\eqref{l:sparsergraphiv}, the game lasts
$|E_1|=o(n)$ moves.

To prove that by the end of this game Maker's graph is indeed a
$\Big(|V_H|/ \ln (|V_H|),\: \ln\ln(|V_H|)\Big)$-expander, it remains
to check condition (E1).

Assume for a contradiction that there exists a set
$X\subseteq V_H$ such that
\begin{align} \label{stars1}
    &|X|\leq |V_H|/ \ln (|V_H|) \quad \text{ and } \quad |X\cup N_{M}(X)|\leq 2\ln \ln(|V_H|)|X|.
\end{align}
We distinguish three cases.

\textbf{Case 1:} $|X\cap A_1|\geq |X|/2$. Then $|X|=O(n \cdot
\ln^{6-{K}}n)$ by assumption \eqref{th::FastExpGame2}. Hence, and by
assumption \eqref{th::FastExpGame4} and \eqref{stars1},
\begin{align*}
    |E_H(X,N_M(X))| & \leq |E_H(X\cup N_M(X))|\\
        &\leq \max\Big\{100\,|X\cup N_M(X)|\,\ln n,\: 100\,|X\cup N_M(X)|^2p\Big\}\\
        & = O \Big(\max\Big\{|X|\ln \ln (|V_H|) \ln n, |X| \ln^2 \ln (|V_H|)\, \ln^{6}n\Big\}\Big).
\end{align*}
But this implies $|E_H(X,N_M(X))| = o(|X| \ln^{{K}-6}n)$ since
${K}>12$. However, since every vertex $v\in A_1$ has Maker degree at
least $c_1 \cdot \ln^{{K}-6}n$ we also conclude that
$|E_M(X,N_{M}(X))|=\Omega(\ln^{{K}-6}n\cdot |X|)$, a contradiction.

\textbf{Case 2:} $|X\cap A_2|\geq |X|/2$ and
$|X|<\frac{n}{\ln^5n\, (\ln \ln n)^3}$. By \eqref{degA2}, for every $v\in
X\cap A_2$, $d_{M\cap H_1}(v)=\Omega(\ln^2n)$.
Hence, $|E_{M\cap H_1}(X,N_M(X))|=\Omega(|X|\ln^2n)$.\\
On the other hand, by Lemma \ref{l:sparsergraph},
\begin{align*}
    |E_{H_1}(X,N_M(X))|
        &\leq \max\Big\{1000\, |X\cup N_M(X)|\, \ln n,\: 1000\,|X\cup N_M(X)|^2\,\ln^7n/n\Big\}\\
        &= O\Big(\max\Big\{|X|\ln n\ln\ln(|V_H|),|X|^2\ln^2\ln(|V_H|)\, \ln^{7}n/n\Big\}\Big)\\
        &=o(|X|\ln^2n),
\end{align*}
where the first equality follows from \eqref{stars1}. But this, again, is a contradiction.

\textbf{Case 3:} $\frac{n}{\ln^5n\, (\ln\ln n)^3}\leq |X| \leq
\frac{|V_H|}{\ln (|V_H|)}$. Since Maker wins (as $\cf$-Breaker) the
game $(E_1,\cf)$ we conclude that
\[ |N_M(X)|\geq |V_H|-\frac{n}{\ln^5n\, (\ln\ln n)^3}
        =\Omega\left(\frac{n}{\ln^4n}\right)=\omega(\ln\ln n|X|), \]
which contradicts \eqref{stars1}.
This completes the proof.
\end{proof}
\section{The Perfect Matching Game}
\label{sec::PMgame}
In this section we prove Theorem \ref{PMgame} and a variant for random bipartite graphs.

\begin{proof} [of Theorem \ref{PMgame}]
First we describe a strategy for Maker and then we prove it is a
winning strategy. At any point during the game, if Maker cannot
follow the proposed strategy (including the time limits) then he
forfeits the game. Before the game starts, Maker picks a
subset $U_0 \subseteq V$ of size $|U_0| = \frac{n}{\ln^4n}$
such that for every $v\in V$, $d(v,U_0)= \Omega(\ln^{{K}-4}n)$.
Such a subset exists because a randomly chosen subset of size
$\frac{n}{\ln^4n}$ has this property by
a Chernoff-type argument a.a.s.
Now, we divide Maker's strategy into two main stages.

\textbf{Stage I:} At this stage, Maker builds a matching  $M_0$ of
size $n/2-n/\ln^4n$ which does not touch $U_0$. Moreover, Maker
wants to ensure that by the end of this stage, for every $v\in V$,
\begin{align}
\label{degreeMFcondition}
    d_F(v,U_0)&=\Omega \left(\ln^{{K}-4}n\right),
    \quad \text{ or } \quad
    d_M(v,U_0)= \Omega \left( \ln^{{K}-6}n\right).
\end{align}
Initially, set $M_0=\emptyset$. For $i\leq n$, as long as
$|M_0|<n/2-n/\ln^4n$, Maker plays his $i$-th move as follows:
\begin{description}
\item [$(1)$] If there exists an integer $j$ such that $i=j \lfloor
\ln n \rfloor$, then Maker plays the degree game $(1:b\, \ln n) - \Deg (V,U_0)$.
\item[$(2)$] Otherwise, Maker claims an arbitrary free edge $e_i\in E$ s.t.
$e_i\cap e=\emptyset$ for every $e\in M_0$ and $e_i\cap
U_0=\emptyset$. Then, Maker updates $M_0$ to $M_0\cup \{e_i\}$.
\end{description}
When Stage I is over, i.e. $|M_0|= n/2-n/\ln^4n$, Maker proceeds to
Stage II.

\textbf{Stage II:} Let $V_H=V\setminus V(M_0)$ such that
$|V_H|=2n/\ln^4n$, and let $H:=(G-B)[V_H]$. We will show that $H$
together with the subgraphs $M$ consisting of Maker's edges and $F$
consisting of the free edges satisfies the conditions of Theorem
\ref{th::FastExpGame}. That is, Maker can play on $H$ according to
the strategy suggested by the theorem and build a suitable expander
in $o(n)$ moves.

Indeed, stage I and stage II constitute a winning strategy, i.e. if
Maker can follow the proposed strategy, he will get a perfect
matching of $G$. By Theorem \ref{th::FastExpGame}, Maker's subgraph
of $H$ will be an $(R,c)$-expander with $R=|V_H|/ \ln (|V_H|)$ and
$c=\ln\ln(|V_H|)$, for large $n$. By Theorem \ref{HamCon}, this
subgraph will be Hamilton-connected and that is why it will contain
a perfect matching $M_1$. Together with $M_0$ this forms a perfect
matching of $G$. Furthermore, Maker will win in $n/2+o(n)$ moves,
since Stage I lasts at most $n/2+o(n)$ rounds, whereas in Stage II
Maker needs only $o(n)$ moves. Thus, we only need to guarantee that
Maker can follow the strategy.

By Lemma \ref{l:propertiesofgnp}, the properties (A1), (A2) and (A3)
hold a.a.s. for $G$. We condition on these, and henceforth assume
that $G$ satisfies (A1), (A2) and (A3), where $f \in \{ 1, (\ln \ln n)^3 \}$.
We consider each stage separately.

\textbf{Stage I:} First, consider part (2), that is when Maker tries
to build the matching $M_0$ greedily. Assume that Maker has to play
his $i$-th move in Stage I and $i\neq j \lfloor \ln n \rfloor$ for
any $j \in \nat$. Furthermore, assume that still
$|M_0|<n/2-n/\ln^4n$. Let $T:=V\setminus(V(M_0)\cup U_0)$. Then
$|T|>n/\ln^4n$. Thus, by (A3) ($f=1$), $|E(T)|=\omega(n)$.
Since $i\leq n$, Maker and Breaker have claimed
$O(n)$ edges so far. In particular, Maker can find a free edge in
$T$ to be added to $M_0$. Thus, he can follow part (2) of Stage I.

Secondly, consider part (1). It is clear that Maker can play the
degree game. Thus, we only need to prove that the desired degree
condition \eqref{degreeMFcondition} will hold. We already know
that there exists a constant $c_1>0$ with $d(v,U_0)\geq
c_1\ln^{{K}-4}n$ for every $v\in V$. If at the end of Stage I
Breaker has $d_B(v,U_0)\leq 0.5c_1\ln^{{K}-4}n$ for some $v \in V$,
then \eqref{degreeMFcondition} holds trivially.
Thus, we can assume that
$d_B(v,U_0)\geq 0.5c_1\ln^{{K}-4}n$. In this case, Claim
\ref{TheDegreeGame} gives $d_M(v,U_0)\geq 0.1c_1\ln^{{K}-6}n/b$.

\textbf{Stage II:} We only need to check whether the conditions of
Theorem \ref{th::FastExpGame} hold for $H=(V_H,E(H))$.
Firstly, $|V_H| = 2n/\ln^4n$. Also,
condition \eqref{th::FastExpGame1} holds trivially by the definition of $H$.

For condition \eqref{th::FastExpGame2}, note that because of the
degree condition \eqref{degreeMFcondition} we can find a constant
$c_2$ such that $V_H = A_1 \cup A_2$, where $A_1 := \{v\in
V_H:d_M(v)\geq c_2\ln^{{K}-6}n\}$ and $A_2 = \{v\in V_H:d_F(v)\geq
c_2\ln^{{K}-4}n\}$. Since stage I took at most $n$ rounds,
$|A_1| = O(n \cdot \ln^{6-{K}}n)$.

Towards condition \eqref{th::FastExpGame3}, note that by (A3) ($f=(\ln \ln n)^3$)
for every disjoint $U,W\subseteq V$ of size $\frac{n}{\ln^5n\, (\ln\ln
n)^3}$, $|E(U,W)|=\Omega\Big(\frac{n\ln^{{K}-10}n}{(\ln\ln n)^6}
\Big)$. Since stage I took at most $n$ rounds, Breaker has claimed
$O(n)$ edges. Hence, in the reduced graph (where Breaker's edges are
deleted), property \eqref{th::FastExpGame3} is satisfied.

Condition \eqref{th::FastExpGame4} follows by (A2) and since $H\subseteq G$.
\end{proof}

In the light of Theorem \ref{KCONgame}, i.e. the $k$-connectivity game,
we would like to get a similar result for a random bipartite graph.
That is, for even $n$ we denote by ${\fam=2 B}_{n,p}$ a bipartite
graph with two vertex classes of size $n/2$, where every possible
edge is inserted with probability $p$. We show that Maker can win
the perfect matching game on ${\fam=2 B}_{n,p}$ fast. The main
difference to the proof of Theorem  \ref{PMgame} is that Maker will
not build an expander, but will rather fulfill the conditions of
Lemma \ref{BipPerMatch}.

\begin{theorem} \label{PMGameBip}
Let $b \geq 1$ be an integer, let $K >12$, $p=\frac{\ln^{K}
(n)}{n}$, and let $G\sim {\fam=2 B}_{n,p}$. Then a.a.s. Maker wins
the $(1:b)$ perfect matching game played on $G$ within
$\frac{n}{2}+o(n)$ moves.
\end{theorem}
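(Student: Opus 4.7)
The plan is to mimic the two-stage strategy used for Theorem~\ref{PMgame}, with two modifications to accommodate the bipartite setting: the reserve set is split across both sides, and the ``endgame'' on the reserve produces a bipartite graph satisfying Lemma~\ref{BipPerMatch} rather than a Hamilton-connected expander. Write $V(G)=V_1\cup V_2$ with $|V_1|=|V_2|=n/2$. First, I would verify the natural bipartite analogues of properties (A1)--(A3) for $G\sim\mathcal{B}_{n,p}$; these follow by the identical Chernoff/union-bound arguments of Lemma~\ref{l:propertiesofgnp}, replacing $\binom{n}{2}p$ by $|S||T|p$ for $S\subseteq V_1$, $T\subseteq V_2$. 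Using this, before the game Maker chooses reserves $U_0^{(1)}\subseteq V_1$ and $U_0^{(2)}\subseteq V_2$, each of size $n/\ln^4 n$, such that for every $v\in V_{3-i}$, $d_G(v,U_0^{(i)})=\Omega(\ln^{K-4}n)$; a random choice works a.a.s.

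In \textbf{Stage I}, Maker greedily builds a bipartite matching $M_0$ of size $n/2-n/\ln^4 n$ whose edges avoid $U_0^{(1)}\cup U_0^{(2)}$, interleaving every $\lfloor\ln n\rfloor$-th move with a $(1:b\ln n)$-$\Deg(V,U_0^{(1)}\cup U_0^{(2)})$ move. Exactly as in the derivation of \eqref{degreeMFcondition}, the bipartite (A3) guarantees a free edge outside $M_0\cup U_0^{(1)}\cup U_0^{(2)}$ is always available, and Claim~\ref{TheDegreeGame} yields at the end of the stage that for every $v\in V$,
\[
d_F(v,U_0^{(3-i)})=\Omega(\ln^{K-4}n)\quad\text{or}\quad d_M(v,U_0^{(3-i)})=\Omega(\ln^{K-6}n),
\]
where $i$ is the side containing $v$.

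In \textbf{Stage II}, set $V_H:=U_0^{(1)}\cup U_0^{(2)}$ (unmatched vertices of $M_0$ on each side, of sizes $n/\ln^4 n$) and let $H:=(G-B)[V_H]$. Since $H$ is a bipartite graph with partition $(U_0^{(1)},U_0^{(2)})$ of the right size, and since the degree condition above together with (A2) and the bipartite (A3) verifies conditions (1)--(4) of Theorem~\ref{th::FastExpGame} verbatim (the only relevant change in the proof is that all ``small sets'' $U,W$ are taken with $U\subseteq U_0^{(1)}$, $W\subseteq U_0^{(2)}$, which strengthens rather than weakens the available $\mathcal{F}$-Breaker bound), Maker can in $o(n)$ moves build on $H$ a bipartite $(R,c)$-expander $M_1$ with $R=|V_H|/\ln|V_H|$ and $c=\ln\ln|V_H|\geq 1$. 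Properties (E1) with $c\geq 1$ and (E2) translate directly into conditions (B1) and (B2) of Lemma~\ref{BipPerMatch} with $r=R=\Theta(n/\ln^5 n)\leq|U_0^{(i)}|/2$, so $M_1$ contains a perfect matching of $U_0^{(1)}\cup U_0^{(2)}$; combined with $M_0$, this gives the desired perfect matching of $G$, and the total number of moves is $n/2+o(n)$.

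The main technical point, and essentially the only place where care is needed beyond parroting the proof of Theorem~\ref{PMgame}, is checking that Theorem~\ref{th::FastExpGame} and Lemma~\ref{l:sparsergraph} still function when their ``disjoint set'' conditions are restricted to $U\subseteq U_0^{(1)}$, $W\subseteq U_0^{(2)}$; since every $\mathcal{F}$-edge used in the proof lies between the two sides, this restriction is harmless. A secondary subtlety is that Case~1 in the expander verification used the original graph bound (A2); here the bipartite analogue $|E(X,N_M(X))|\leq 100|X\cup N_M(X)|^2p$ for $\mathcal{B}_{n,p}$ is sufficient and is proved exactly as in Lemma~\ref{l:propertiesofgnp}. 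Everything else, including the estimate that Stage~I contributes $n/2+o(n)$ moves and Stage~II contributes $o(n)$ moves, transfers without change.
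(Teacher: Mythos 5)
Your proposal follows essentially the same two-stage approach as the paper: greedy matching plus degree game in Stage~I, then a sparse-subgraph argument (a la Lemma~\ref{l:sparsergraph}) on the residual bipartite graph in Stage~II, verifying the hypotheses of Lemma~\ref{BipPerMatch}. The only stylistic difference is packaging: the paper explicitly says Maker ``will not build an expander like before'' and instead directly verifies $(B1)$ and $(B2)$ (playing the degree game on odd moves and the $\cF$-Breaker game, with $\cF$ restricted to pairs $U\subseteq U_1,\ W\subseteq U_2$, on even moves), whereas you phrase the same construction as a ``bipartite $(R,c)$-expander.'' That framing is harmless as long as the definition is understood in the restricted sense --- $(E1)$ for $X$ contained in a single side, $(E2)$ only for $X\subseteq U_0^{(1)},\ Y\subseteq U_0^{(2)}$ --- since the literal $(E2)$ condition is vacuously false in a bipartite graph for sets on the same side. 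One small imprecision: you say conditions (1)--(4) of Theorem~\ref{th::FastExpGame} are verified ``verbatim,'' but condition (3) as stated (edges between \emph{any} two disjoint $U,W\subseteq V_H$) is impossible in a bipartite $H$; you do note the needed one-side/other-side restriction in the parenthetical, so this is a wording issue rather than a gap, but it should be stated as a genuine bipartite variant of Theorem~\ref{th::FastExpGame} rather than the theorem itself. With that clarification, the argument is correct and matches the paper's.
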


\begin{proof}
The proof is analogous to the proof of Theorem \ref{PMgame},
so we just sketch it here.

For $G=(U_1\cup U_2,E(G))\sim {\fam=2 B}_{n,p}$, first choose a
subset $U_0\subset U_1\cup U_2$ such that $|U_0\cap U_1|=|U_0\cap
U_2|=\frac{n}{2\ln^4n}$, and $d(v,U_0)= \Omega(\ln^{{K}-4}n)$
for every $v\in U_1\cup U_2$.

Then Maker divides the game into two stages.

\textbf{Stage I:} Maker again builds greedily a matching $M_0$ of
size $n/2-n/\ln^4n$ which does not touch $U_0$. Furthermore, Maker
ensures that by the end of this stage for some $c_1>0$,
$d_F(v,U_0)\geq c_1\ln^{{K}-4}n$ or $d_M(v,U_0)\geq c_1
\ln^{{K}-6}n$ for every $v\in U_1\cup U_2$.

\textbf{Stage II:} Let $V_H=V\setminus V(M_0)$ with $|V_H\cap
U_i|=\frac{n}{\ln^4n}$ and let $H=(G-B)[V_H]$. Maker plays similarly
to the strategy given in the proof of Theorem \ref{th::FastExpGame}.
This time, he will not build an expander like before. But he will
ensure that after $o(n)$ rounds his subgraph of $H$ will satisfy
conditions $(B1)$ and $(B2)$ of Lemma \ref{BipPerMatch} with
$r=|V_H|/ \ln(|V_H|)$.

Similarly to Lemma \ref{l:sparsergraph}, we find a sparser
subgraph $H_1 \subseteq H$ with the analogue properties for bipartite graphs.
As in the proof of \ref{th::FastExpGame}, Maker plays in
every even move the $(2b:1)$ game $(E_1,\cf)$ as $\cf$-Breaker where
$E_1$ is the edge set of $H_1$, and where
$$\cf=\Bigg\{ E_1(U,W) : U\subseteq U_1,\, W\subseteq U_2 \textrm{ and } |U|=|W|=\frac{n}{\ln^5n\, (\ln \ln n)^3}\Bigg\}.$$
Winning this game, he will ensure (B2) with $r=|V_H|/ \ln(|V_H|)$.

To obtain (B1), Maker plays in each odd move the $(1:2b)$ degree game.
\end{proof}

\section{The Hamiltonicity Game}
\label{sec::HAMgame}

In this section we prove Theorem \ref{HAMgame}.

\begin{proof}
First we describe a strategy for Maker and then we prove it is a
winning strategy. At any point during the game, if Maker cannot
follow the proposed strategy (including the time limits) then he
forfeits the game.
As in the perfect matching game, Maker picks a
subset $U_0 \subseteq V$ of size $|U_0| = \frac{n}{10 \ln^4n}$
such that for every $v\in V$, $d(v,U_0)= \Omega(\ln^{{K}-4}n)$.

We divide Maker's strategy into the following four main stages.

\textbf{Stage I:} At this stage, Maker builds a matching  $M_0$ of
size $n/2-n/(9\ln^4n)$ which does not touch $U_0$. Moreover, Maker
wants to ensure that by the end of this stage
$d_F(v,U_0)=\Omega\left(\ln^{{K}-4}n\right)$ or $d_M(v,U_0)= \Omega
\left( \ln^{{K}-6}n\right)$ for every $v\in V$. As soon as this
stage is over, Maker proceeds to Stage II.

\textbf{Stage II:}
For a path $P$ let $End(P)$ denote the set of its endpoints.
Throughout this stage, Maker maintains a set
$M_1$ of vertex disjoint paths, a subset $M_2 \subseteq M_1$ and a
set $End:=\{v\in V: \exists P\in M_1\setminus M_2 \textrm{ such that
} v\in End(P)\}$. Initially, $M_1:=M_0$ and $M_2=\emptyset$. Let $r$
be the number of rounds Stage I lasted. For every $i > r$, Maker
will play his $i$-th move of this stage as follows:

\begin{description}
\item [$(1)$] If there exists an integer $j$ such that $i=j \lfloor
\ln n \rfloor$, then Maker plays the degree game $(1:b\, \ln n)-\Deg (V, U_0)$.

\item[$(2)$] Otherwise, Maker claims a free edge $xy$ between two
vertices from $End$ which are endpoints of two disjoint paths. Also,
Maker updates $M_1$ by replacing the two old paths merged through
$xy$ by a new one. Note that this new path is not deleted from
$M_1$. Maker also updates $End$ accordingly.

\item[$(3)$] If there is any path $P$ of length at least $10\ln^{{K}/3}n$ then
Maker updates $M_2:=M_2\cup \{P\}$.
\end{description}

This stage ends when $|M_1| = \lfloor n/\ln^{{K}/3}n \rfloor $.
Thus, Stage II lasts not more than $n/2 + o(n)$ rounds.
When this stage ends, Maker proceeds to Stage III.

\textbf{Stage III:} In this stage Maker ensures that his graph on
$V\setminus U_0$ will contain a path $P$ of length at least
$n-n/\ln^4n$. Moreover, Maker does so within $o(n)$ moves. Let $s$
be the number of rounds Stage I and II lasted. For every $i > s$,
Maker plays his $i$-th move of this stage as follows:

\begin{description}
\item [$(1)$] If there exists an integer $j$ such that $i=j \lfloor
\ln n \rfloor$, then Maker plays the degree game $(1:b\, \ln n)-\Deg (V, U_0)$.

\item[$(2)$] Otherwise, consider the paths in $M_1$ of length at least
$3 \ln ^{K /4}n$. Maker tries to connect these paths, not
necessarily through their endpoints, but through points close to
their ends. The full details of this partial game will be given in
the proof below.
\end{description}

\textbf{Stage IV:} Let $x,y$ be the endpoints of $P$,
the long path created in Stage III. Let $V_H=
\left(V \setminus V(P)\right) \cup \{x,y\}$. At this stage Maker
builds a Hamilton path on $(G-B)[V_H]$ with $x,y$ as its endpoints.
Moreover, Maker does so within $o(n)$ moves.

It is evident that if Maker can follow the proposed strategy then he
wins the Hamiltonicity game within $n+o(n)$ moves. It thus remains
to prove that indeed Maker can follow the proposed strategy without
forfeiting the game.

By Lemma \ref{l:propertiesofgnp}, the properties (A1), (A2) and (A3)
hold a.a.s. for $G$. We condition on these, and henceforth assume
that $G$ satisfies (A1), (A2) and (A3), where $f \in \{ 1, (\ln \ln n)^3 \}$.
We consider each stage separately.

\textbf{Stage I:} The proof that Maker can follow the proposed
strategy for this stage is analogous to the proof that Maker can
follow Stage I of the proposed strategy in the proof of Theorem
\ref{PMgame}.

\textbf{Stage II:} Assume $i\neq j \lfloor \ln n \rfloor$. If
$|M_1|>n/\ln^{{K}/3}n$, then $|M_1\setminus M_2|=\Omega
\left(n/\ln^{{K}/3}n\right)$, since there can be at most
$n/(10\ln^{{K}/3}n)$ disjoint paths of length at least
$10\ln^{{K}/3}n$. Hence, $|End|=\Omega \left(n/\ln^{{K}/3}n\right)$
and by (A3) ($f=1$), we have that the number of edges of $G$ spanned
by $End$ is $\Omega \left(n\ln^{{K}/3}n\right)=\omega(n)$.
Therefore, we conclude that indeed Maker can claim a free edge in
$G[End]$.

\textbf{Stage III:} Let $U'=\{v \in V:\, v \text{ belongs to a path
of length } \leq 3\ln^{{K}/4}n \text{ in } M_1 \}$ and update
$M_1:=M_1 \setminus \{P:\, P\text{ is of length } \leq 3\ln^{{K}/4}n
\}$. Notice that
\[ |U'|\leq 3\ln^{{K}/4}n \cdot \frac{n}{\ln^{{K}/3}n}=o\left(\frac{n}{\ln^4n}\right).\]
So the sum of the lengths of all paths
in $M_1$ is at least
\begin{equation} \label{pathLengthSum}
    |V(M_0)\setminus U'|\geq n-n/4\ln^4n.
\end{equation}
For every path $P\in M_1$, define $L(P)$ and $R(P)$ to be the first
and last $\ln^{{K}/4}n$ vertices of $P$ (according to some fixed
orientation of the path). Notice that since $|V(P)|>3\ln^{{K}/4}n$
it follows that $L(P) \cap R(P)=\emptyset$ for every $P\in M_1$.
Now, let $m=n/\ln^{{K}/2}n$ and let ${\mathcal H}=(X,{\mathcal F})$
be the hypergraph whose vertices are all edges of $G- B$  with both
endpoints in $\bigcup_{P \in M_1}\left(L(P)\cup R(P)\right)$ and
whose hyperedges are:
$$\cf=\Bigg\{E_{G\setminus B}(S,T) :
\exists \textrm{ distinct } P_1,...,P_{2m} \in M_1 \textrm{ s.t }
S=\bigcup_{i=1}^m L(P_i),T=\bigcup_{i=m+1}^{2m}R(P_i)\Bigg\}.$$

Note that for $E_{G\setminus B}(S,T)\in\cf$,
$|S|=|T|=m\ln^{{K}/4}n=n/\ln^{{K}/4}n$ holds. Thus, by (A3) ($f=1$),
we have for an element of $\cf$ that
$$|E_{G\setminus B}(S,T)|\geq |E_G(S,T)|-bn=\Omega\left(n\ln^{{K}/2}n\right).$$
Moreover, by (A2), we get that
$|X|=O\left(\left(\ln^{{K}/4}n|M_1|\right)^2p\right)=O\left(n\ln^{5{K}/6}n\right).$

Now,
\begin{align*} \sum_{F \in \cf}2^{-|F|/\ln^{0.9{K}}n}
            & = \sum_{F \in \cf}2^{-\Omega(n\ln^{-0.4{K}}n)}\\
            & \leq \binom{|M_1|}{m}^2 2^{-\Omega(n\ln^{-0.4{K}}n)}\\
            % & \leq \binom{n/\ln^{{K}/3}n}{n/\ln^{{K}/2}n}^2 2^{-\Omega(n\ln^{-0.4{K}}n)}\\
            & \leq \left( e\ln^{{K}/6}n\right)^{2n/\ln^{{K}/2}n} 2^{-\Omega(n\ln^{-0.4{K}}n)}\\
            & \leq \exp\Bigg(\frac{2n}{\ln^{{K}/2}n}(1+{K}\ln\ln n/6)- \Omega\Big(\frac{n}{\ln^{0.4{K}}n}\Big)\Bigg)=o(1).
\end{align*}

Thus, by Theorem \ref{bwin} Maker as $\cf$-Breaker can win the
$(\ln^{0.9{K}}n,1)$ game $(X, \cf)$. Lemma \ref{lem::fakeMoves}
therefore tells us that Maker can claim at least one element in
every $F\in \cf$ within $1+|X|/(\ln^{0.9{K}}n+1)=o(n)$ moves.

To complete Stage III, let us define the auxiliary directed graph
$D=(V_D,E_D)$ whose vertices are $\{P: P \in M_1\}$ and whose
directed edges are $\{(P,Q): E_M(R(P),L(Q)) \neq \emptyset\}$.
Notice that for every pair of disjoint subsets $S,T \subseteq V_D$
such that $|S|=|T|=m$, there exists an edge in $D$ from $S$ to $T$,
since Maker wins the game $(X, \cf)$. Now, we claim that Maker has a
path of the desired length in his graph.  By Lemma
\ref{l:LongDirectedPath}, $D$ contains a directed path ${\mathcal
P}=P_0\ldots P_t$ of length $t\geq |V_D|-2m+1$. Further, note that
any path in $M_1$ has length at most $20 \ln^{K / 3}n$. Combining
this with \eqref{pathLengthSum}, removing paths $P\in M_1$ which do
not appear in ${\mathcal P}$ and deleting unnecessary parts of
$L(P)$ and $R(P)$ from paths $P\in {\mathcal P}$ we conclude that
Maker has thus created a path of length at least
$n-n/4\ln^4n-2|M_1|\ln^{{K}/4}n-2m\cdot 20\ln^{{K}/3}n\geq
n-n/\ln^4n$.

\textbf{Stage IV:} Let $P$ be the long path Maker has created in
Stage III, and let $x,y$ be its endpoints. Denote $V_H=\left(V
\setminus V(P)\right)\cup \{x,y\}$. Analogously to the perfect
matching game, we can use Theorem \ref{th::FastExpGame} and Theorem
\ref{HamCon} on $H:=(G-B)[V_H]$.
That is, Maker can build an expander on a sparse subgraph, and thus
obtains a Hamilton
path in $H$ with $x,y$ as its endpoints in $o(n)$ moves. This
completes the proof.
\end{proof}

\section{The $k$-Connectivity Game}
\label{sec::KCONgame}

In this section we prove Theorem \ref{KCONgame}.
It is a simple application of the Hamiltonicity game, the Perfect-matching game
on random bipartite graphs, and the degree game.

\begin{proof}
Let $G \sim \gnp$, and randomly partition the vertex set into $k$
disjoint sets $V_1,...,V_{k-1},W$ where each $V_i$ has size
$\left\lfloor \frac{n}{k-1}\right\rfloor$ ($W$ might be empty).
For every $1\leq i \leq k-1$, let $G_i= G[V_i]$, and for every
$1\leq i < j \leq k-1$ let $G_{ij}$ be the bipartite subgraph of $G$
with parts $V_i$ and $V_j$. From the definition it is clear that
$G_i \sim \cG_{\left\lfloor \frac{n}{k-1}\right\rfloor,p}$ for every
$1\leq i \leq k-1$ and $G_{ij} \sim {\fam=2 B}_{2\left\lfloor
\frac{n}{k-1}\right\rfloor,p}$ for every $1\leq i<j\leq k-1$.

Now, Maker's strategy is to play the Hamiltonicity game on every
$G_i$, the perfect matching game on every $G_{ij}$, and for every
$w\in W$, Maker wants to claim $k$ distinct edges $ww'$
with $w' \in V\setminus W$
(recall that $G$ is typically such that $d(v)=
\Theta(\ln^K n)$ for every vertex  $v\in V(G)$).
Thus, in total
Maker plays on $t \leq k-1 + \binom{k-1}{2} + k-2 = \binom{k}{2}
+k \leq k^2$ boards. Enumerate all boards arbitrarily, and let Maker
play on board $i \mod t$ in his $i$-th move.
Between any two moves
on a particular board, Breaker has claimed at most $bk^2$ new edges
on this board. Using the trick of fake moves we can assume that
Maker plays the $(1:bk^2)$ Hamiltonicity game on every $G_i$, the
$(1:bk^2)$ perfect matching game on every $G_{ij}$, and the
degree-game $(1:bk^2)-\Deg(\{w\},V\setminus W)$ for every $w \in W$.
By Theorem \ref{HAMgame} and Theorem
\ref{PMGameBip}, every Hamiltonicity game and every perfect matching
game lasts $\left\lfloor \frac{n}{k-1}\right\rfloor +o(n)$ moves,
whereas the games $Deg(\{w\},V\setminus W)$ last
in total at most $k|W| = O(1)$ moves.
If Maker succeeds on some board (that is, either he formed a
Hamilton cycle on some $G_j$, or a perfect matching on some
$G_{j_1j_2}$, or $d_M(w,V\setminus W))\geq k$ for $w\in W$),
then he quits playing on that particular board. That is, he ignores
this board and plays on another one where he has not won yet.

By Lemma \ref{lem::kconnected} Maker is thus able to build a
$k$-connected graph on $G[V_1\cup \ldots \cup V_{k-1}]$. Also, since
for every $w \in W$, $\d_M(w, V \setminus W) \geq k$, Maker's final
graph will be $k$-connected.
In total, Maker plays at most
\begin{align*}
&(k-1) \left( \left\lfloor \frac{n}{k-1} \right\rfloor +o(n) \right)
 + \binom{k-1}{2} \left(\left\lfloor \frac{n}{k-1} \right\rfloor +o(n) \right)
 +O(1)  \leq \frac{kn}{2} +o(n)
\end{align*}
moves, as claimed.
\end{proof}

\section{Open problems}
\label{sec::OpenProblems}

We conclude with the list of several open problems directly relevant
to the results of this paper.

\textbf{Sparser graphs.} For the three games considered in this
paper, we would like to find fast winning strategies for Maker when
the games are played on  $G\sim \gnp$, where
$p=(1+\epsilon)\frac{\ln n}{n}$ for a constant $\epsilon>0$. Our
proofs heavily depend on the ability of Maker to build an expander
fast (cf. Theorem \ref{th::FastExpGame}), which does not seem
possible for such small $p$. We were not able to prove an analogue
to Lemma \ref{l:sparsergraph} for smaller $p$'s mainly because of
Property (iv) in this lemma. Therefore, we find it very interesting
to either find fast strategies for Maker substantially different
from ours, or alternatively provide Breaker with a strategy for
delaying Maker's win by a linear number of moves.

\textbf{Faster winning strategies for Maker.} In this paper we have
proved that Maker can win the perfect matching game, the
Hamiltonicity game and the $k$-connectivity game played on $G\sim
\gnp$ within $n/2+o(n)$, $n+o(n)$ and $kn/2+o(n)$ moves,
respectively. Although this is asymptotically tight it could be that
the error term does not depend on $n$. It would be interesting to
find the error term explicitly, or at least to provide tighter
estimates on it.

\textbf{Fast winning strategies for other games.} It would be very
interesting to prove similar results, i.e. fast winning strategies
for Maker, for other games played on $G\sim \gnp$. We suggest the
fixed-spanning-tree game. To be precise, let $\Delta \in \nat$ be
fixed, and let $(T_n)_{n \in \nat}$ be a sequence of trees on $n$
vertices with bounded maximum degree $\Delta (T_n) \leq \Delta$.
Maker's goal is to build a copy of $T_n$ within $n+o(n)$ moves.
Notice that this problem might be much harder than what we proved
since even the problem of embedding spanning trees into $G\sim \gnp$
is still not completely settled (for more details see, e.g \cite{K},
\cite{HKS1}).

\textbf{Winning strategies for Red.} The problems considered in this
paper were initially motivated by finding winning strategies for Red
in the strong games via fast winning strategies for Maker (see
\cite{FH}, \cite{FH1}). It would be very interesting to prove that
indeed typically Red can win the analogous strong games played on
$G\sim \gnp$.

\textbf{Acknowledgement:} The authors want to thank Peleg Michaeli
for fruitful conversations.

\end{document}